\documentclass[journal,twoside,web]{ieeecolor}
\usepackage{cite}
\usepackage{amsmath,amssymb,amsfonts}
\usepackage{algorithm}
\usepackage{algpseudocode}
\usepackage{graphicx}
\usepackage{textcomp}
\usepackage{bm}
\usepackage{mathrsfs}
\usepackage{dblfloatfix}
\usepackage{caption}
\newtheorem{definition}{Definition}
\newtheorem{asm}{Assumption}
\newtheorem{thm}{Theorem}
\newtheorem{remark}{Remark}
\newtheorem{lemma}{Lemma}
\newtheorem{cor}{Corollary}
\newtheorem{proposition}{Proposition}
\newtheorem{problem}{Problem}
\usepackage{generic}
\def\BibTeX{{\rm B\kern-.05em{\sc i\kern-.025em b}\kern-.08em
    T\kern-.1667em\lower.7ex\hbox{E}\kern-.125emX}}
\begin{document}
\title{Stabilizer Design for Policy Iteration in Stochastic Linear Quadratic Control: A Spectrum-Assignment Approach}
\author{Xinyu Cao, Bing-Chang Wang,\IEEEmembership{Senior Member, IEEE}, and Ying Cao
\thanks{Xinyu Cao, Bing-Chang Wang, Ying Cao are with the School of Control Science and Engineering, Shandong University, Jinan 250061, P. R. China. (e-mail: caoxinyu@mail.sdu.edu.cn, bcwang@sdu.edu.cn, yingcao@mail.sdu.edu.cn).}}

\maketitle

\begin{abstract}
Policy iteration (PI) is an important reinforcement learning tool for solving optimal control problems which includes an initialization stage, i.e., the search for an initial  stabilizing controller. However, the initialization stage typically relies on complete model information, thereby imposing substantial constraints on the initialization of model-free PI. For stochastic systems with multiplicative noise dependent on state and control, the stability is not ensured by Hurwitz conditions as in the deterministic case, but rather by a Lyapunov-type inequality that incorporates both drift and diffusion terms. Therefore, the corresponding model-free PI initialization problem is more challenging. To this end, a novel spectrum assignment method is proposed to obtain an initial stabilizer for PI in continuous-time indefinite stochastic linear quadratic control. With the help of the Lyapunov-type operator's spectrum, the original system is gradually approximated from the stable auxiliary system by adjusting a cumulative factor, thereby obtaining a stabilizing control gain. 
Furthermore, by leveraging system data and
 adjusting the cumulative factor, we design a model-free algorithm that does not rely on an initial stabilizing policy and can
 achieve optimal control. Finally, simulation results are provided to validate the effectiveness of the proposed methods.

\end{abstract}

\begin{IEEEkeywords}
Reinforcement learning, Optimal control, Operator spectrum, Initial stabilizing control policy, Policy iteration.
\end{IEEEkeywords}

\section{Introduction}
In reinforcement learning (RL), an agent improves its policy by interacting with the environment, aiming to learn an optimal policy that maximizes the expected long-term cumulative reward \cite{lewis2009reinforcement, mendel1994reinforcement, sutton2018reinforcement, wang2020reinforcement}. RL has been applied in various fields, including robotics \cite{moroncelli2025duality}, autonomous driving \cite{feng2023dense}, communication networks \cite{sutton2018reinforcement}, and power systems \cite{chen2021reinforcement}, among others. Unlike classical dynamic programming, which relies on a known system model and Bellman optimality equation, RL approximates the optimal policy by interacting with the environment and leveraging system operational data, thus overcoming the challenges posed by unknown or difficult-to-identify models \cite{sutton2018reinforcement}. Furthermore, \cite{bertsekas2019reinforcement} points out that RL, which approximates the optimal policy via a policy function, is better suited to high-dimensional and large-scale problems than dynamic programming. As a result, RL has been widely applicable to solving optimal control problems for large-scale systems with unknown models, including both deterministic and stochastic systems. 

Among existing RL algorithms, policy iteration (PI) and value iteration (VI) are two major iterative approaches. PI exhibits faster convergence, but it usually requires an initial control policy that stabilizes the closed-loop system \cite{luo2020policy,chen2023adaptive}. By contrast, VI avoids the need for an initial stabilizing policy, but it generally needs more iterations to approach the target solution, and thus converges more slowly \cite{luo2017output}.  The works \cite{luo2020policy} and \cite{luo2017output} examine the characteristics of PI and VI algorithms, comparing their convergence rates. Specifically, starting from an initial stabilizing control policy, the PI algorithm alternates between policy evaluation and policy improvement to approach the optimal control law. On the other hand, the VI algorithm can start from an arbitrary initial value function, directly iterate the value function to update the control policy implicitly, and then obtain the optimal control law once the value function converges. This means that, under an initial stabilizing policy, the PI method may achieve the optimal control solution more efficiently than the VI method due to its typically faster convergence rate. It should be emphasized that obtaining a stabilizing control law is difficult, and usually requires complete system information \cite{rami2000linear-du5}. Therefore, how to find an initial stabilizing control for PI without relying on system information has become a key issue, which is precisely the motivation for this study.

Existing research on the stabilizing controller for PI initialization mainly falls into two categories. The first solves the problem by combining VI and PI. For instance, \cite{luo2017output} proposed a multi-step policy analysis framework that strikes a trade-off between VI and PI through multi-step heuristic dynamic programming. Subsequently, \cite{luo2019balancing} balanced VI and PI by introducing a balance parameter, which both accelerates VI and avoids the need for an initial stabilizing control. \cite{gao2022resilient} proposed a successive approximation method, termed hybrid iteration, which does not rely on an initial admissible controller and exhibits faster convergence than VI. The second directly focuses on the construction of initial stabilizers, with representative approaches including homotopy initialization \cite{chen2022homotopic}, auxiliary stable system design \cite{pang2025scaling, li2025cooperative}, and stabilizing control learning based on off-policy/Q-learning \cite{li2026adaptive}. The goal is to provide an initial stabilizer for subsequent PI without fully relying on an exact model. 
The homotopy method first constructs a stable artificial system and then gradually approaches the original system, thereby avoiding reliance on an initial stabilizing policy.  The homotopy idea has been further developed in \cite{feng2020connectivity,chen2023adaptive,li2026adaptive,chen2022homotopic,li2025cooperative,lamperski2020computing}. Specifically,  \cite{chen2023adaptive} obtained a stabilizing control using a homotopic method by varying the iteration step length. \cite{pang2025scaling} used scaling policy iteration to obtain a stabilizing gain. \cite{li2026adaptive} analyzed the stabilizing control problem for discrete-time systems via the discount factor. \cite{fan2025homotopy} constructed a homotopy path and proposed a model-free homotopy PI method to compute a stabilizing control policy. In \cite{li2026adaptive, chen2022homotopic, pang2025scaling, fan2025homotopy}, the problem of finding an initial stabilizer for deterministic systems has been well studied. Furthermore, for stochastic discrete-time systems with additive noise, \cite{cui2024robust} obtained a stabilizing controller by solving linear matrix inequalities through system identification. However, this method relies on model identification, and requires samples of high quality and sufficient richness.

In contrast to the existing work, the problem of finding an initial stabilizer for PI in stochastic systems with multiplicative noise has yet to be thoroughly investigated. Compared with the deterministic case, the stochastic system dynamics include a diffusion term that depends on both the state and the control, rendering the associated stochastic Riccati equation more complex. Unlike deterministic stability, mean-square stability of stochastic systems is governed by both the drift and diffusion terms, not by the drift term alone. Therefore, the method in \cite{chen2022homotopic}, which constructs an artificial stable system based on Hurwitz stability in deterministic systems, is no longer valid. Furthermore, existing stochastic optimal control methods typically address standard settings with positive (semi‑)definite cost weighting matrices. However, when the state and control weighting matrices are allowed to be indefinite, the standard linear quadratic (LQ) RL theory offers no direct guarantee for either the stabilizing solution of the generalized Riccati equation or the preservation of closed‑loop stability during iteration. 
Inspired by the above discussion, this paper investigates the  indefinite stochastic linear quadratic (SLQ) control problem with state- and control-dependent multiplicative noise from both model-based and model-free perspectives. For the model-based aspect, we first consider the solvability of the stochastic algebraic Riccati equation (SARE) and the stability during PI for the SLQ problem in the indefinite weighting case. Furthermore, to ensure mean-square stability, we introduce a Lyapunov-type operator and leverage the relationship between its spectrum and mean-square stability to construct an auxiliary stable system. The poles of the operator spectrum are then gradually shifted to the left half-plane by adjusting a cumulative factor, thereby stabilizing the original closed-loop system. For the model-free aspect, we construct data-driven policy evaluation equations and update the policy to obtain a stabilizing policy and achieve the optimal solution to the indefinite SLQ control problem. The main contributions are given as follows:

1) For the indefinite SLQ problem, we first prove that the indefinite PI method preserves mean-square stabilizability and exhibits monotonic convergence in Theorem~1. Then, a model-based RL algorithm containing two phases is proposed.  Phase~I gives an initial stabilizing controller via  spectrum assignment and arbitrary auxiliary positive definite cost weights. Phase~II is to execute indefinite PI to obtain the optimal control using stabilizer from the first phase. 

2) A model-free two-phase PI algorithm is proposed. The method introduces a cumulative factor to construct the state integral data matrix and transforms the PI into a linear regression problem, thereby obtaining the optimal control without requiring the system matrices.

The remainder of this paper is organized as follows. 
Section~\ref{II} introduces the problem formulation and preliminaries. 
Section~\ref{IV} presents the model-based  PI algorithm via spectral-assignment. 
Section~\ref{V} develops the  data-driven model-free algorithm. 
Section~\ref{VI} gives simulation results. 
Section~\ref{VII} concludes the paper.

\textbf{Notation.}
Let \(\mathbb R\) and \(\mathscr C\) denote the sets of real and complex numbers, respectively. 
For a matrix \(X\), \(X^\top\), \(\operatorname{Tr}(X)\), \(\|X\|_F\), and \(\|X\|\) denote its transpose, trace, Frobenius norm, and induced 2-norm, respectively. 
For a symmetric matrix \(X\), \(X>0\) \((X\ge 0)\) means that \(X\) is (semi-)positive definite. For \(n\in\mathbb N\), \(I_n\) denotes the \(n\times n\) identity matrix. The identity operator on \(\mathbb S^n\) is denoted by $\mathcal I_{\mathbb S^n}:\mathbb S^n\to\mathbb S^n,
\
\mathcal I_{\mathbb S^n}(Z)=Z.$ 
These two objects are distinguished throughout the paper.
The set of  symmetric \(n\times n\) matrices is denoted by \(\mathbb S^n\). 
For \(X=X^\top\), \(\lambda_{\min}(X)\) and \(\lambda_{\max}(X)\) denote its minimum and maximum eigenvalues. 
For a square matrix or a linear operator \(\mathcal A\), \(\sigma(\mathcal A)\) denotes its spectrum, and $s(\mathcal A):=\max_{\lambda\in\sigma(\mathcal A)}\operatorname{Re}(\lambda)$ 
denotes its spectral abscissa. 
The open left-half complex plane is denoted by $\mathscr C^-:=\{z\in\mathscr C:\operatorname{Re}(z)<0\}.$ 
The Kronecker product is denoted by \(\otimes\). 
For a matrix \(X\), \(\operatorname{vec}(X)\) stacks its columns into a vector. 
For a symmetric matrix \(S\in\mathbb R^{p\times p}\),
\(\operatorname{vech}(S)\) stacks its lower triangular entries. 
For \(z\in\mathbb R^p\), let $\operatorname{vecv}(z):=D_p^\top\operatorname{vec}(zz^\top),\ \operatorname{vecv}(z)^\top \operatorname{vech}(S) = z^\top Sz, S = S^\top, (x \otimes v)^\top \operatorname{vec}(M) = v^\top M x, M \in \mathbb{R}^{m \times n},$
where \(D_p\) is the duplication matrix satisfying
\(\operatorname{vec}(S)=D_p\operatorname{vech}(S)\) for every
\(S=S^\top\in\mathbb R^{p\times p}\).  
The expectation operator is denoted by \(\mathbb E[\cdot]\).

\section{Problem formulation and preliminaries}
\label{II}

\subsection{Problem formulation}
\label{subsec:problem-formulation}

Consider a continuous-time stochastic system
\begin{equation}
\mathrm{d}x(t)
=
\bigl[Ax(t)+Bu(t)\bigr]\mathrm{d}t
+
\bigl[Cx(t)+Du(t)\bigr]\mathrm{d}w(t),
\label{1-2}
\end{equation}
where \(x(t)\in\mathbb R^n\), \(u(t)\in\mathbb R^m\), and
\(x(0)=x_0\in\mathbb R^n\). The matrices $(A,B;C,D)\in
\mathbb R^{n\times n}\times\mathbb R^{n\times m}
\times\mathbb R^{n\times n}\times\mathbb R^{n\times m}$ 
are constant, and \(w(\cdot)\) is a scalar standard Wiener process defined
on a complete filtered probability space
\((\Omega,\mathcal F,\mathbb P,\{\mathcal F_t\}_{t\ge0})\).

For \(K\in\mathbb R^{m\times n}\), define
\[
A_K:=A-BK,
\qquad
C_K:=C-DK,
\]
and introduce the stochastic Lyapunov operator and its adjoint on
\(\mathbb S^n\):
\begin{align}
\mathcal L_K(Z)
&:=
A_KZ+ZA_K^\top+C_KZC_K^\top,
&&Z\in\mathbb S^n,
\label{eq:LK-def}\\
\mathcal L_K^*(P)
&:=
A_K^\top P+PA_K+C_K^\top PC_K,
&&P\in\mathbb S^n.
\label{eq:LK-adjoint-def}
\end{align}

\begin{definition}[Mean-square stabilizing controller]
\label{def:ms-stabilizingcontroller-feedback}
A gain \(K\in\mathbb R^{m\times n}\) is mean-square stabilizing if
\begin{equation}
s(\mathcal L_K)<0.
\label{stability-condition}
\end{equation}
The set of all mean-square stabilizing gains is denoted by
\begin{equation}
\mathscr K_{\rm ms}
:=
\left\{
K\in\mathbb R^{m\times n}:s(\mathcal L_K)<0
\right\}.
\label{eq:Kms}
\end{equation}
\end{definition}
\begin{asm}
\label{ass:ms-stabilizable}
The system \eqref{1-2} is mean-square stabilizable, namely,
\(\mathscr K_{\rm ms}\neq\varnothing\).
\end{asm}
\begin{definition}[Admissible control]
\label{def:admissible-control}
For any initial state \(x_0\), let
\(\mathcal U_{\rm ad}(x_0)\) be the set of progressively measurable controls
\(u:[0,\infty)\to\mathbb R^m\) such that
\[
\mathbb E\int_0^T\|u(t)\|^2\,\mathrm{d}t<\infty,
\ \forall T>0,
\]
\eqref{1-2} admits a unique solution, and
\begin{equation}
\lim_{t\to\infty}\mathbb E\|x(t)\|^2=0,
\
\mathbb E\int_0^\infty
\left(
\|x(t)\|^2+\|u(t)\|^2
\right)\mathrm{d}t<\infty.
\label{eq:admissible-control}
\end{equation}
\end{definition}

For every \(K\in\mathscr K_{\rm ms}\), the feedback
\(u_K(t):=-Kx(t)\) is admissible. Indeed, \eqref{stability-condition}
implies mean-square exponential stability of the closed-loop system
\begin{equation}\label{sys-k}
    \mathrm{d}x(t)=A_Kx(t)\,\mathrm{d}t+C_Kx(t)\,\mathrm{d}w(t),
\end{equation}
which yields
\[
\mathbb E\int_0^\infty\|x(t)\|^2\,\mathrm{d}t<\infty,
\quad
\mathbb E\int_0^\infty\|u_K(t)\|^2\,\mathrm{d}t<\infty.
\]

The objective of this paper is to solve the following model-free
indefinite SLQ control problem, which can be formulated as follows.

Define the associated performance index
\begin{equation}
J(x_0,u)
:=
\mathbb E\int_0^\infty
\left[
x(t)^\top Qx(t)+u(t)^\top Ru(t)
\right]\mathrm{d}t,
\label{10-9}
\end{equation}
where $Q=Q^\top\in\mathbb S^n,
\
R=R^\top\in\mathbb S^m$ 
are allowed to be indefinite. For every \(u\in\mathcal U_{\rm ad}(x_0)\), 
and hence \(J(x_0,u)\) is finite on \(\mathcal U_{\rm ad}(x_0)\).

\begin{problem}[Model-free indefinite SLQ control]
\label{prob:indefinite-slq}
Without access to the system matrices \(A,B;C,D\), find an admissible optimal controller
\[
u^*(t)=-K^*x(t), K^*\in \mathscr K_{\rm ms}
\]
such that
\begin{equation}
\inf_{u\in\mathcal U_{\rm ad}(x_0)}J(x_0,u)
=
J(x_0,u^*),
\label{eq:SLQ-problem}
\end{equation}
\end{problem}

The solution to  Problem~\ref{prob:indefinite-slq} is associated with the following SARE
\begin{equation}
A^\top P+PA+C^\top PC+Q
-L(P)^\top G(P)^{-1}L(P)=0,
\label{eq:sare}
\end{equation}
where
\begin{align}
G(P)&:=R+D^\top PD,
\label{eq:G-P}\\
L(P)&:=B^\top P+D^\top PC.
\label{eq:L-P}
\end{align}
    \begin{definition}[Stabilizing SARE solution]
\label{def:stabilizing-sare}
A matrix \(P=P^\top\) is a stabilizing solution to \eqref{eq:sare} if
\[
G(P)>0,
\quad
K(P):=G(P)^{-1}L(P)\in\mathscr K_{\rm ms}.
\]
Define
\begin{equation}
\mathcal M:=
\left\{
P=P^\top:
G(P)>0,\quad H(P)\ge0
\right\},
\label{eq:M-def}
\end{equation}
where
\begin{equation}
H(P):=
\begin{bmatrix}
A^\top P+PA+C^\top PC+Q & L(P)^\top\\
L(P) & G(P)
\end{bmatrix}.
\label{eq:H-def}
\end{equation}
\end{definition}

Since \(Q\) and \(R\) may be indefinite, \(G(P)\) is not necessarily
positive definite. Therefore, \eqref{eq:sare} is considered together with
the constraint \(G(P)>0\). To this end, we give the following assumption.

\begin{asm}
\label{ass:riccati-feasibility}
The set \(\mathcal M\) has a nonempty interior. Equivalently, there exists
a matrix \(\widetilde P=\widetilde P^\top\) such that $G(\widetilde P)>0,
\
H(\widetilde P)>0.$
\end{asm}

\begin{lemma}[{\cite[Theorems~4.2--4.3]{rami2001solvability}}]
\label{lem:stabilizing-sare}
Under Assumptions~\ref{ass:ms-stabilizable} and
\ref{ass:riccati-feasibility}, the SARE \eqref{eq:sare} has a unique
stabilizing solution \(P^*=P^{*\top}\), the optimal control gain is
\begin{equation}
K^*:=G(P^*)^{-1}L(P^*) \in \mathscr K_{\rm ms}.
\label{eq:Kstar}
\end{equation}
\end{lemma}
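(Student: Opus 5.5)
The plan is a Newton--Kleinman (policy-iteration) argument that exploits the interior point $\widetilde P$ of Assumption~\ref{ass:riccati-feasibility}. Uniqueness is handled first and separately.

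\emph{Uniqueness.} Suppose $P_1,P_2$ are two stabilizing solutions, with gains $K_i:=G(P_i)^{-1}L(P_i)$. Completing the square gives, for any $K$ and any symmetric $P$,
\[
\mathcal L_K^*(P)+Q+K^\top RK = \mathcal R(P)+(K-K(P))^\top G(P)(K-K(P)),
\]
where $\mathcal R(P)$ is the left side of \eqref{eq:sare}. Apply this with $K=K_2$ to both $P_1$ and $P_2$ and subtract. Using $\mathcal R(P_1)=\mathcal R(P_2)=0$, this yields
\[
\mathcal L_{K_2}^*(P_1-P_2)=(K_2-K_1)^\top G(P_1)(K_2-K_1)\ge 0 .
\]
Since $s(\mathcal L_{K_2})<0$, the adjoint satisfies $-(\mathcal L_{K_2}^*)^{-1}=\int_0^\infty e^{t\mathcal L_{K_2}^*}\,dt$. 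This operator is positive because $e^{t\mathcal L_K^*}$ is the evolution of $\mathbb E[x^\top P x]$ along \eqref{sys-k}. Hence $P_1\le P_2$. Exchanging the roles of $P_1$ and $P_2$ gives $P_2\le P_1$, so $P_1=P_2$.

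\emph{Existence.} Start from $K_0\in\mathscr K_{\rm ms}$, which exists by Assumption~\ref{ass:ms-stabilizable}, and run the iteration
\[
\mathcal L_{K_i}^*(P_i)+Q+K_i^\top RK_i=0,\qquad K_{i+1}=G(P_i)^{-1}L(P_i).
\]
The key comparison is with $\widetilde P$. Since $H(\widetilde P)>0$, there is $\epsilon>0$ such that for every $K$,
\[
\mathcal L_K^*(\widetilde P)+Q+K^\top RK=\begin{bmatrix} I\\ -K\end{bmatrix}^\top H(\widetilde P)\begin{bmatrix} I\\ -K\end{bmatrix}\ge \epsilon I .
\]
Subtracting the equation defining $P_i$ gives $\mathcal L_{K_i}^*(P_i-\widetilde P)\le-\epsilon I$. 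Stability of $K_i$ and positivity of the resolvent then give $P_i\ge\widetilde P$. Consequently $G(P_i)\ge G(\widetilde P)>0$, so $K_{i+1}$ is well defined.

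Next I show that $K_{i+1}$ is stabilizing. The completion-of-squares identity gives
\[
\mathcal L_{K_{i+1}}^*(P_i-\widetilde P)\le -\epsilon I-(K_{i+1}-K_i)^\top G(P_i)(K_{i+1}-K_i)<0 .
\]
Because $P_i-\widetilde P>0$, this is a strict Lyapunov inequality, so $s(\mathcal L_{K_{i+1}})<0$; here I use the standard equivalence between mean-square stability and solvability of the Lyapunov inequality for positive operators. The same identity yields $\mathcal L_{K_{i+1}}^*(P_i-P_{i+1})\le 0$, hence $P_{i+1}\le P_i$.

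The sequence is therefore monotonically nonincreasing and bounded below by $\widetilde P$, so it converges to some $P^*\ge\widetilde P$ with $G(P^*)>0$. Passing to the limit in the iteration shows that $P^*$ solves \eqref{eq:sare}.

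\emph{Stability of $K^*$.} This is the main obstacle, because the strict margin must survive the limit. The previous inequality passes to the limit as
\[
\mathcal L_{K^*}^*(P^*-\widetilde P)\le-\epsilon I ,
\]
where $K^*=K(P^*)$ and $P^*-\widetilde P\ge0$. The Lyapunov criterion needs $P^*-\widetilde P>0$, which is not yet known. To repair this, I would rewrite the inequality as $\mathcal L_{K^*}^*(X)\le-\epsilon I$ with $X\ge0$. I would then use the Krein--Rutman property of the resolvent-positive operator $\mathcal L_{K^*}$: its spectral abscissa is an eigenvalue with a nonzero eigenvector $Z\ge0$. Pairing the inequality with $Z$ gives
\[
s(\mathcal L_{K^*})\,\mathrm{Tr}(XZ)\le-\epsilon\,\mathrm{Tr}(Z)<0 .
\]
Since $\mathrm{Tr}(XZ)\ge0$, this forces $s(\mathcal L_{K^*})<0$, so $K^*\in\mathscr K_{\rm ms}$.

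Optimality of $u^*=-K^*x$ over $\mathcal U_{\rm ad}(x_0)$ follows by applying It\^o's formula to $x^\top P^*x$ and completing the square with $G(P^*)>0$. For admissible controls $\mathbb E[x(t)^\top P^* x(t)]\to0$, and this gives $J(x_0,u)=x_0^\top P^*x_0+\mathbb E\int_0^\infty (u+K^*x)^\top G(P^*)(u+K^*x)\,dt$, which is minimized by $u=-K^*x$.
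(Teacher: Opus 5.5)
Your argument is correct and complete, and it is a genuinely different route, since the paper gives no proof of this lemma at all: it imports the statement from \cite{rami2001solvability}.

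Your proof is a self-contained Newton--Kleinman argument. It runs the same iteration as Theorem~\ref{thm:indefinite-PI}, but uses the strictly feasible point $\widetilde P$ as the lower barrier instead of $P^*$. In the paper's proof of Theorem~\ref{thm:indefinite-PI}, $P^*$ plays that barrier role. It guarantees $G(P^k)\ge G(P^*)>0$, and through Lemma~\ref{lem:instability-certificate} it gives stability of $K^{k+1}$. The cited uniqueness is then needed to identify the limit, so that proof depends on this lemma.

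Replacing $P^*$ by $\widetilde P$ works because $H(\widetilde P)>0$ gives a margin $\epsilon=\lambda_{\min}(H(\widetilde P))$ that is uniform in $K$ and survives the limit. The margin also makes your stability steps more direct. The paper, with only $Z_k,Y_{k+1}\ge0$ available, needs an extra null-space argument ($(K^k-K^{k+1})X=0$, and likewise in the limit step) to derive a contradiction. Your $-\epsilon I$ term gives strict negativity immediately when paired with a nonzero $Z\ge0$. Your Krein--Rutman pairing is the same device as Lemma~\ref{lem:instability-certificate}.

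What your approach buys:
\begin{itemize}
\item the lemma follows from tools already in the paper (Lemmas~\ref{lem:policy-identity}, \ref{lem:positive-lyapunov-inverse} and \ref{lem:instability-certificate});
\item existence is constructive;
\item you obtain the extra information $P^*\ge\widetilde P$;
\item existence of $P^*$ and convergence of the indefinite PI can be established in one stroke.
\end{itemize}
What the citation buys is brevity.

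Two small points to tidy up. First, you assert $P_i-\widetilde P>0$ after deriving only $P_i-\widetilde P\ge0$. Either note that $P_i-\widetilde P\ge\epsilon\int_0^\infty e^{t\mathcal L_{K_i}^*}(I_n)\,\mathrm{d}t>0$, or reuse your final pairing argument, which needs only $X\ge0$. Second, in the optimality verification the stochastic integral should be handled by localization, since admissibility controls only second moments.
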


To obtain the optimal solutions in  \eqref{eq:sare} iteratively, the model-based indefinite PI is proposed in the following theorem:
\begin{thm}[Indefinite PI]
\label{thm:indefinite-PI}
Suppose that Assumptions~\ref{ass:ms-stabilizable} and
\ref{ass:riccati-feasibility} hold. Let \(K^0\in\mathscr K_{\rm ms}\).
For \(k=0,1,2,\ldots\), compute
\begin{align}
\mathcal L_{K^k}^*(P^k)+Q_{k}&=0,
\quad
Q_{k}:=Q+(K^k)^\top RK^k,
\label{eq:indefinite-PE}\\
K^{k+1}&=G(P^k)^{-1}L(P^k).
\label{eq:indefinite-PI}
\end{align}
Then, the following hold:
\begin{enumerate}
\item $P^k-P^*\ge0,
\quad
G(P^k)\ge G(P^*)>0,
\quad \forall k\ge0;$ 

consequently, \(K^{k+1}\) in \eqref{eq:indefinite-PI} is well defined;

\item \(K^k\in\mathscr K_{\rm ms}\) for every \(k\ge0\);

\item $P^*\leq P^{k+1}\leq P^k,
\quad \forall k\ge0;$
\item $P^k\to P^*,
\quad
K^k\to K^*,
\quad k\to\infty.$

\end{enumerate}
\end{thm}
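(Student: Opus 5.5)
Our plan is the Kleinman-type induction, with Lemma~\ref{lem:stabilizing-sare} as the anchor. We replace positivity of $Q$ and $R$ by the stabilizing solution $P^*$ and by $G(P^*)>0$. The basic tool is a completion-of-squares identity. For any $K$ and symmetric $P$,
\begin{equation*}
\mathcal L_K^*(P)+Q+K^\top RK
=\Big(A^\top P+PA+C^\top PC+Q-L(P)^\top G(P)^{-1}L(P)\Big)
+(K-K(P))^\top G(P)(K-K(P)),
\end{equation*}
which holds whenever $G(P)$ is invertible. Here $K(P)=G(P)^{-1}L(P)$. At $P=P^*$ the bracket vanishes by \eqref{eq:sare}, so
\begin{equation*}
\mathcal L_K^*(P^*)+Q+K^\top RK=(K-K^*)^\top G(P^*)(K-K^*)\ge 0 .
\end{equation*}

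\textbf{Step 1 (base and the bound $P^k\ge P^*$).} Suppose $K^k\in\mathscr K_{\rm ms}$. Since $s(\mathcal L_{K^k})<0$, the Lyapunov equation \eqref{eq:indefinite-PE} has a unique solution, namely
\begin{equation*}
P^k=\int_0^\infty e^{t\mathcal L_{K^k}^*}(Q_k)\,dt .
\end{equation*}
Subtracting the displayed identity at $K=K^k$ gives
\begin{equation*}
\mathcal L_{K^k}^*(P^k-P^*)=-(K^k-K^*)^\top G(P^*)(K^k-K^*)\le 0 .
\end{equation*}
The operator $\mathcal L_K^*$ generates a positive semigroup on $\mathbb S^n$: $e^{t\mathcal L_K^*}(X)=\mathbb E[\Phi(t)^\top X\Phi(t)]$ with $\Phi$ the fundamental solution of \eqref{sys-k}. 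Hence $-(\mathcal L_{K^k}^*)^{-1}$ maps $\mathbb S^n_+$ into $\mathbb S^n_+$, and $P^k\ge P^*$. Then $G(P^k)-G(P^*)=D^\top(P^k-P^*)D\ge0$, which gives item 1 and shows that $K^{k+1}$ is well defined.

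\textbf{Step 2 (stability of $K^{k+1}$; the main obstacle).} Because $Q$ and $R$ are indefinite, $P^k$ itself cannot serve as a Lyapunov certificate. We use $V:=P^k-P^*\ge0$ instead. Applying the identity with $P=P^k$, $K=K^k$, then subtracting the same identity with $K=K^{k+1}=K(P^k)$, yields
\begin{equation*}
\mathcal L_{K^{k+1}}^*(P^k)+Q_{k+1}=-(K^k-K^{k+1})^\top G(P^k)(K^k-K^{k+1}).
\end{equation*}
Subtracting the $P^*$ identity at $K^{k+1}$ then gives
\begin{equation*}
\mathcal L_{K^{k+1}}^*(V)=-(K^k-K^{k+1})^\top G(P^k)(K^k-K^{k+1})-(K^{k+1}-K^*)^\top G(P^*)(K^{k+1}-K^*)\le 0 .
\end{equation*}
A Lyapunov inequality with merely semidefinite $V$ does not directly give $s(\mathcal L_{K^{k+1}})<0$. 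We argue by contradiction. Suppose $s(\mathcal L_{K^{k+1}})\ge0$. By the Krein--Rutman property of resolvent-positive operators, there is an eigenvector $Z\ge0$, $Z\ne0$, with $\mathcal L_{K^{k+1}}(Z)=\lambda Z$ and $\lambda=s\ge0$. Pairing with $V$ gives
\begin{equation*}
0\ge\langle \mathcal L^*_{K^{k+1}}(V),Z\rangle=\lambda\langle V,Z\rangle\ge 0 .
\end{equation*}
Hence $\operatorname{Tr}\big((K^{k+1}-K^*)^\top G(P^*)(K^{k+1}-K^*)Z\big)=0$, and since $G(P^*)>0$ this means $(K^{k+1}-K^*)Z=0$. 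It follows that $A_{K^{k+1}}Z=A_{K^*}Z$ and $C_{K^{k+1}}Z=C_{K^*}Z$. Therefore
\begin{equation*}
\mathcal L_{K^*}(Z)=A_{K^*}Z+ZA_{K^*}^\top+C_{K^*}ZC_{K^*}^\top=\mathcal L_{K^{k+1}}(Z)=\lambda Z,
\end{equation*}
using symmetry of $Z$ for the transposed terms. This makes $\lambda\ge 0$ an eigenvalue of $\mathcal L_{K^*}$, contradicting $K^*\in\mathscr K_{\rm ms}$. So $K^{k+1}\in\mathscr K_{\rm ms}$, and induction gives items 1 and 2. The delicate points are verifying that the positive eigenvector exists (resolvent positivity of $\mathcal L_K$) and the argument that $\mathcal L_{K^*}Z=\mathcal L_{K^{k+1}}Z$. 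The latter is where $G(P^*)>0$ from Assumption~\ref{ass:riccati-feasibility} is essential.

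\textbf{Step 3 (monotonicity and convergence).} Subtract \eqref{eq:indefinite-PE} at step $k+1$ from the $\mathcal L_{K^{k+1}}^*(P^k)$ identity of Step 2:
\begin{equation*}
\mathcal L_{K^{k+1}}^*(P^k-P^{k+1})=-(K^k-K^{k+1})^\top G(P^k)(K^k-K^{k+1})\le0 .
\end{equation*}
Positivity of $-(\mathcal L^*_{K^{k+1}})^{-1}$ gives $P^{k+1}\le P^k$, which is item 3. A monotone sequence of symmetric matrices bounded below by $P^*$ converges to some $\bar P\ge P^*$. Then $G(\bar P)\ge G(P^*)>0$, so $K^k\to\bar K:=K(\bar P)$. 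Passing to the limit in \eqref{eq:indefinite-PE} shows that $\bar P$ solves \eqref{eq:sare}. It remains to show $\bar P=P^*$; we do not try to prove that $\bar P$ is stabilizing directly. Instead, the $P^*$ identity at $\bar K$ gives
\begin{equation*}
\mathcal L_{\bar K}^*(\bar P-P^*)=-(\bar K-K^*)^\top G(P^*)(\bar K-K^*).
\end{equation*}
Subtracting the analogous identity for $\bar P$ at $K^*$ gives
\begin{equation*}
\mathcal L_{K^*}^*(\bar P-P^*)=(K^*-\bar K)^\top G(\bar P)(K^*-\bar K)\ge0 .
\end{equation*}
Since $K^*$ is stabilizing, $-(\mathcal L^*_{K^*})^{-1}$ is positive, so $\bar P-P^*\le 0$. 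Combined with $\bar P\ge P^*$ this gives $\bar P=P^*$, hence $K^k\to K^*$, which is item 4.
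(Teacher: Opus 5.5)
Your proof is correct. For items 1)--3) it is the same Kleinman-type induction as the paper's. The ingredients match: the policy identity (your completion of squares), the positive inverse of a stable $\mathcal L_K^*$ (Lemma~\ref{lem:positive-lyapunov-inverse}), and a contradiction built on a nonnegative eigenvector of $\mathcal L_{K^{k+1}}$ (the paper's Lemma~\ref{lem:instability-certificate}, your Krein--Rutman step).

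There are two differences worth noting. In the stability step, the paper uses the vanishing of $\langle X,Z_k\rangle$ to get $(K^k-K^{k+1})X=0$ and contradicts $K^k\in\mathscr K_{\rm ms}$. You use the other forced zero, $\langle Z,Y_{k+1}\rangle=0$, and contradict $K^*\in\mathscr K_{\rm ms}$. Both are valid, and your version is essentially the argument the paper reserves for $K^\infty$, applied at every iterate.

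The more substantive difference is item 4). The paper repeats the eigenvector argument to show that $K^\infty$ is mean-square stabilizing, and then invokes the uniqueness part of Lemma~\ref{lem:stabilizing-sare}. You instead apply the completion of squares at $P=\bar P$, $K=K^*$. Together with the SARE for $P^*$, this gives $\mathcal L_{K^*}^*(\bar P-P^*)=(K^*-\bar K)^\top G(\bar P)(K^*-\bar K)\ge0$, hence $\bar P\le P^*$, and so $\bar P=P^*$.

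This is the classical maximality argument. It is more elementary: there is no second spectral argument, and it needs only existence of the stabilizing solution, not uniqueness. It also shows in passing that $P^*$ dominates every SARE solution with $G(P)>0$. The paper's route, by contrast, proves directly that the limit gain is stabilizing, which you obtain only a posteriori from $\bar K=K^*$. Finally, your first display in that paragraph, the identity for $\mathcal L_{\bar K}^*(\bar P-P^*)$, is the one the paper uses but plays no role in your argument.
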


\begin{proof}
See Appendix~\ref{thm1+prop2}.
\end{proof}
\begin{remark}
    A Lyapunov-based construction of a stabilizing gain $K^0$ requires 
stability analysis. However, the original indefinite weights \(Q\) and \(R\) may not ensure \(Q+K^\top RK >0\) holds.
Therefore, we introduce auxiliary positive definite weights $Q_{\rm a},R_{\rm a}$ and employ \(Q_{\rm a}+K^\top R_{\rm a}K>0\) solely for the stabilizer construction. The justification for this approach is that, by Definition~\ref{def:ms-stabilizingcontroller-feedback}, \(K\in\mathscr K_{\rm ms}\) depends only on the closed-loop dynamics $(A-BK,C-DK)$ and is independent of the weights \(Q,R\). 

The PI algorithm for Problem~\ref{prob:indefinite-slq}, however, requires an
initial stabilizing gain \(K^0\in\mathscr K_{\rm ms}\), which is generally
unavailable when the system matrices are unknown. A two-phase algorithm
is therefore proposed: Phase~I uses the auxiliary positive definite
weights to construct such a stabilizer \(K^\dagger\in\mathscr K_{\rm ms}\), then  Phase~II sets
\(K^0:=K^\dagger\) and applies PI to solve the original indefinite SLQ
Problem~\ref{prob:indefinite-slq}.
\end{remark}

In Theorem~\ref{thm:indefinite-PI}, the initial stabilizing gain $K^0$ is applied to initialize the PI procedure. This requirement is routine in the model-based setting but becomes nontrivial when system information is completely unknown. To circumvent this obstacle,  
this paper proposes the stabilizer design via a spectrum-assignment approach, which converts any given control gain into a stabilizing one from a new perspective. Then, we  design novel PI algorithms that operate in two phases, applicable to both model-based and model-free settings, to address Problem~\ref{prob:indefinite-slq}. Specifically, the main algorithms are carried out sequentially in the following two phases:
\begin{enumerate}
\item \textit{Phase I (stabilizer construction).} 
Starting from an arbitrary \(K_{\rm a}^0\), a
spectrum-assignment-based PI procedure is developed to construct
a gain $K^\dagger\in\mathscr K_{\rm ms}.$ To perform this stage, we introduce
positive definite auxiliary weights in Phase I
\[
Q_{\rm a}=Q_{\rm a}^\top>0,
\qquad
R_{\rm a}=R_{\rm a}^\top>0.
\]
A spectral-translation mechanism is used to construct $K^\dagger$. These
auxiliary quantities are used exclusively for stabilizer construction in Phase I and
do not enter the performance index \eqref{10-9}.

\item \textit{Phase II (indefinite SLQ optimization).} 
The stabilizer \(K^\dagger\) obtained in Phase~I is served as the initial gain $K^0:=K^\dagger.$ 
It then provides the admissible initialization required by
Theorem~\ref{thm:indefinite-PI}. Starting from \(K^0\), the corresponding
PI obtains the approximate  solutions \((P^*,K^*)\) to the original indefinite SLQ
Problem~\ref{prob:indefinite-slq}.
\end{enumerate}
\begin{remark}
    By Definition~\ref{def:ms-stabilizingcontroller-feedback}, a stabilizing controller \(K\) is only associated with the dynamics $A,B,C,D$ and does not involve weights \(Q,R\). Therefore, we introduce  positive definite costs $Q_a,R_a$, thereby providing the shifted auxiliary closed-loop system with a stability margin. Notably, these weights serve exclusively to construct the stabilizer in Phase I and are not utilized in Phase II.
\end{remark}

\subsection{Preliminaries}
\label{subsec:preliminaries}

For a symmetric matrix \(P\) satisfying \(G(P)>0\), define
\[
K(P):=G(P)^{-1}L(P).
\]
For \(K\in\mathbb R^{m\times n}\), let
\begin{align}
Q_K&:=Q+K^\top RK,
\label{P1.3}\\
\Phi(P,K)&:=\mathcal L_K^*(P)+Q_K.
\label{P1.2}
\end{align}

\begin{lemma}[Policy identity]
\label{lem:policy-identity}
Let \(P=P^\top\) satisfy \(G(P)>0\). Then, for every
\(K\in\mathbb R^{m\times n}\),
\begin{equation}
\Phi(P,K)-\Phi(P,K(P))
=
\bigl(K-K(P)\bigr)^\top
G(P)
\bigl(K-K(P)\bigr).
\label{P1.4}
\end{equation}
\end{lemma}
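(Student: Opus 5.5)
The plan is to expand $\Phi(P,K)$ as a quadratic function of $K$ and complete the square around $K(P)$. Substituting $A_K=A-BK$ and $C_K=C-DK$ into \eqref{eq:LK-adjoint-def} and \eqref{P1.2}, we will collect terms by degree in $K$:
\begin{align*}
\Phi(P,K)&=A^\top P+PA+C^\top PC+Q
-K^\top\bigl(B^\top P+D^\top PC\bigr)
-\bigl(PB+C^\top PD\bigr)K\\
&\quad+K^\top\bigl(R+D^\top PD\bigr)K.
\end{align*}
By \eqref{eq:G-P} and \eqref{eq:L-P}, this reads
\[
\Phi(P,K)=\Psi(P)-K^\top L(P)-L(P)^\top K+K^\top G(P)K,
\]
where $\Psi(P):=A^\top P+PA+C^\top PC+Q$ does not depend on $K$. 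The cross term $-C^\top PDK-K^\top D^\top PC$ comes from $C_K^\top PC_K$, and the quadratic contributions $K^\top D^\top PDK$ and $K^\top RK$ combine into $K^\top G(P)K$.

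Next, since $G(P)>0$ is invertible and $K(P)=G(P)^{-1}L(P)$, we have $L(P)=G(P)K(P)$. Substituting this gives
\[
\Phi(P,K)=\Psi(P)-K^\top G(P)K(P)-K(P)^\top G(P)K+K^\top G(P)K.
\]
The right-hand side of \eqref{P1.4} expands, using the symmetry of $G(P)$, to
\[
K^\top G(P)K-K^\top G(P)K(P)-K(P)^\top G(P)K+K(P)^\top G(P)K(P).
\]
Hence $\Phi(P,K)=\Psi(P)-K(P)^\top G(P)K(P)+(K-K(P))^\top G(P)(K-K(P))$. Evaluating this at $K=K(P)$ shows $\Phi(P,K(P))=\Psi(P)-K(P)^\top G(P)K(P)$, and subtracting the two expressions yields \eqref{P1.4}.

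There is no real obstacle here. The only care needed is in the bookkeeping of the cross terms from $C_K^\top PC_K$, which must be grouped with the $B$ terms to form $L(P)$. The invertibility of $G(P)$ is needed only to define $K(P)$; positivity is not used, and it becomes relevant only later, when \eqref{P1.4} is used to conclude $\Phi(P,K)\ge\Phi(P,K(P))$.
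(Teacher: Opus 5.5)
Your proof is correct and takes essentially the same route as the paper: expand $\Phi(P,K)=\Psi(P)-K^\top L(P)-L(P)^\top K+K^\top G(P)K$, substitute $L(P)=G(P)K(P)$, and complete the square. The paper subtracts the two expansions before substituting, whereas you complete the square first and then evaluate at $K=K(P)$; this is only a reordering, and your remark that only the symmetry and invertibility of $G(P)$ are used (not its positivity) is accurate.
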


\begin{proof}
See Appendix~\ref{app:proofs}.
\end{proof}

\begin{lemma}[Positive inverse of a stable Lyapunov operator]
\label{lem:positive-lyapunov-inverse}
Let \(K\in\mathscr K_{\rm ms}\). For every \(Y=Y^\top\ge0\), the
equation
\begin{equation}
\mathcal L_K^*(X)+Y=0
\label{eq:positive-lyapunov}
\end{equation}
admits a unique solution \(X=X^\top\ge0\).
\end{lemma}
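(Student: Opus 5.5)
The plan is to write the solution explicitly as an integral over the adjoint semigroup, use positivity of that semigroup to get \(X\ge0\), and use the spectral condition for both convergence and uniqueness.

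\textbf{Uniqueness.} The operator \(\mathcal L_K^*\) is the adjoint of \(\mathcal L_K\) with respect to the trace inner product \(\langle X,Y\rangle=\operatorname{Tr}(XY)\) on \(\mathbb S^n\). Hence \(\sigma(\mathcal L_K^*)=\sigma(\mathcal L_K)\), since the spectrum of a real operator is closed under conjugation. Because \(K\in\mathscr K_{\rm ms}\), we have \(s(\mathcal L_K^*)<0\). In particular \(0\notin\sigma(\mathcal L_K^*)\), so \(\mathcal L_K^*\) is invertible on the finite-dimensional space \(\mathbb S^n\). Therefore \eqref{eq:positive-lyapunov} has exactly one symmetric solution, \(X=-(\mathcal L_K^*)^{-1}(Y)\).

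\textbf{Integral representation.} Let \(e^{t\mathcal L_K^*}\) denote the operator exponential on \(\mathbb S^n\). Since \(s(\mathcal L_K^*)<0\), there are constants \(c,\alpha>0\) with \(\|e^{t\mathcal L_K^*}\|\le ce^{-\alpha t}\). Consequently
\[
X:=\int_0^\infty e^{t\mathcal L_K^*}(Y)\,\mathrm{d}t
\]
converges. Differentiating under the integral gives
\[
\mathcal L_K^*(X)=\int_0^\infty \tfrac{\mathrm d}{\mathrm dt}e^{t\mathcal L_K^*}(Y)\,\mathrm dt=-Y .
\]
So this \(X\) is the unique solution.

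\textbf{Positivity: the main step.} It remains to show that \(e^{t\mathcal L_K^*}\) maps positive semidefinite matrices to positive semidefinite matrices. I would prove this probabilistically. Let \(\Phi(t)\) be the fundamental matrix of the closed-loop system \eqref{sys-k}, so \(x(t)=\Phi(t)x_0\). By Itô's formula, \(Z(t):=\mathbb E[\Phi(t)^\top Y\Phi(t)]\) satisfies \(\dot Z=\mathcal L_K^*(Z)\) with \(Z(0)=Y\). Hence \(e^{t\mathcal L_K^*}(Y)=\mathbb E[\Phi(t)^\top Y\Phi(t)]\ge0\).

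Equivalently, for every \(x_0\),
\[
x_0^\top X x_0=\mathbb E\int_0^\infty x(t)^\top Y x(t)\,\mathrm dt\ge0,
\]
where the integral is finite by the mean-square exponential stability stated after Definition~\ref{def:admissible-control}.

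If one prefers a purely algebraic argument, I would split \(\mathcal L_K^*=\mathcal A+\Pi\), with \(\mathcal A(P)=A_K^\top P+PA_K\) and \(\Pi(P)=C_K^\top PC_K\). Then \(e^{t\mathcal A}(P)=e^{A_K^\top t}Pe^{A_Kt}\) and \(\Pi\) both preserve the cone of positive semidefinite matrices. The Lie–Trotter product formula then shows that \(e^{t(\mathcal A+\Pi)}\) preserves the cone as well.

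Since the cone of positive semidefinite matrices is closed and convex, integrating \(e^{t\mathcal L_K^*}(Y)\ge0\) over \(t\in[0,\infty)\) gives \(X\ge0\), which completes the proof.
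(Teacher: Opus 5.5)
Your proposal is correct and follows essentially the same route as the paper: invertibility of $\mathcal L_K^*$ from $s(\mathcal L_K)<0$, the representation $X=\int_0^\infty e^{t\mathcal L_K^*}(Y)\,\mathrm{d}t$, and positivity from $e^{t\mathcal L_K^*}(Y)=\mathbb E[\Phi(t)^\top Y\Phi(t)]\ge0$. One small precision: It\^{o}'s formula on $\Phi^\top Y\Phi$ directly gives $\dot Z=\mathbb E[\Phi^\top\mathcal L_K^*(Y)\Phi]$, not $\mathcal L_K^*(Z)$, so that step needs the semigroup (Markov) property or duality with the forward equation $\frac{\mathrm{d}}{\mathrm{d}t}\mathbb E[\Phi M\Phi^\top]=\mathcal L_K(\mathbb E[\Phi M\Phi^\top])$; your Lie--Trotter argument avoids this entirely and proves the cone invariance that the paper only asserts.
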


\begin{proof}
See Appendix~\ref{lem:ct_order1}.
\end{proof}

\begin{lemma}[{\cite{zhang2004stabilizability}}]
\label{lem:instability-certificate}
If \(K\notin\mathscr K_{\rm ms}\), then there exist a scalar
\(\mu\ge0\) and a nonzero matrix \(X=X^\top\ge0\) such that
\begin{equation}
\mathcal L_K(X)=\mu X.
\label{eq:spectral_obstruction}
\end{equation}
\end{lemma}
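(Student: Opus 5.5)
The statement to prove is Lemma~\ref{lem:instability-certificate}: if \(K\notin\mathscr K_{\rm ms}\), then some nonzero \(X=X^\top\ge0\) and some \(\mu\ge0\) satisfy \(\mathcal L_K(X)=\mu X\). The plan is a Krein--Rutman (Perron--Frobenius) argument for the operator \(\mathcal L_K\) acting on \(\mathbb S^n\), ordered by the closed cone \(\mathbb S^n_+\) of positive semidefinite matrices. This cone is solid, pointed and self-dual under the trace inner product.

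\textbf{Step 1: resolvent positivity.} Write \(\mathcal L_K=\mathcal A+\Pi\), where \(\mathcal A(Z)=A_KZ+ZA_K^\top\) and \(\Pi(Z)=C_KZC_K^\top\).
\begin{itemize}
\item \(\Pi\) is positive, meaning \(\Pi(\mathbb S^n_+)\subset\mathbb S^n_+\).
\item \(e^{t\mathcal A}(Z)=e^{A_Kt}Ze^{A_K^\top t}\) is positive for every \(t\ge0\).
\end{itemize}
By the Trotter product formula (or by the Dyson series for \(e^{t(\mathcal A+\Pi)}\), whose terms are all positive), \(e^{t\mathcal L_K}\) maps \(\mathbb S^n_+\) into itself for all \(t\ge0\). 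Equivalently, for real \(\alpha>s(\mathcal L_K)\) the resolvent \((\alpha\mathcal I_{\mathbb S^n}-\mathcal L_K)^{-1}=\int_0^\infty e^{-\alpha t}e^{t\mathcal L_K}\,\mathrm dt\) is a positive operator.

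\textbf{Step 2: the spectral abscissa is an eigenvalue with a PSD eigenvector.} This is the main obstacle. For a resolvent-positive operator on a finite-dimensional space ordered by a solid closed cone, the standard Perron--Frobenius theorem for cone-preserving maps, applied to the positive operator \(e^{\mathcal L_K}\), gives the following. The spectral radius \(r(e^{\mathcal L_K})=e^{s(\mathcal L_K)}\) is an eigenvalue of \(e^{\mathcal L_K}\) with an eigenvector \(X\in\mathbb S^n_+\setminus\{0\}\).

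One must then pass back from \(e^{\mathcal L_K}\) to \(\mathcal L_K\). The cleanest way is to apply Perron--Frobenius to the positive operator \(R_\alpha=(\alpha\mathcal I_{\mathbb S^n}-\mathcal L_K)^{-1}\) for a fixed \(\alpha>s(\mathcal L_K)\).
\begin{itemize}
\item The eigenvalues of \(R_\alpha\) are \((\alpha-\lambda)^{-1}\) for \(\lambda\in\sigma(\mathcal L_K)\).
\item The largest modulus among them is \((\alpha-s(\mathcal L_K))^{-1}\). This is attained at the real point \(\lambda=s(\mathcal L_K)\), because among \(\lambda\) with \(\operatorname{Re}\lambda\le s\) the distance \(|\alpha-\lambda|\) is minimized by the real one only if \(s\) itself lies in the spectrum. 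That is exactly what Perron--Frobenius guarantees: \(r(R_\alpha)\) is an eigenvalue.
\end{itemize}
So there is \(X\in\mathbb S^n_+\setminus\{0\}\) with \(R_\alpha X=r(R_\alpha)X\). Rearranging gives \(\mathcal L_K X=\mu X\) with \(\mu=\alpha-1/r(R_\alpha)\), which is real and equals the spectral abscissa \(s(\mathcal L_K)\).

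\textbf{Step 3: conclusion.} Since \(K\notin\mathscr K_{\rm ms}\), we have \(s(\mathcal L_K)\ge0\), and therefore \(\mu=s(\mathcal L_K)\ge0\). This yields exactly \(\mathcal L_K(X)=\mu X\) with \(\mu\ge0\) and \(X\ge0\) nonzero.

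If a self-contained argument is preferred to citing Krein--Rutman, there is an alternative for Step 2. Use the cone-preserving resolvent \(R_\alpha\) and run a Brouwer fixed-point argument on the compact convex set \(\{X\ge0:\operatorname{Tr}X=1\}\) with the map \(X\mapsto (R_\alpha X+\epsilon I)/\operatorname{Tr}(R_\alpha X+\epsilon I)\). Then let \(\epsilon\to0\) and use compactness to obtain a nonnegative eigenvector. Its eigenvalue equals \(r(R_\alpha)\) by a standard Gelfand-formula argument using the solidity of the cone.
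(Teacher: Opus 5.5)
Your route is the standard argument behind this result: resolvent positivity of $\mathcal L_K$, then finite-dimensional Krein--Rutman on $\mathbb S^n_+$. The paper itself gives no proof; it just cites \cite{zhang2004stabilizability}. Steps 1 and 3 are fine. The gap is in Step 2, where you assert that $\mu=\alpha-1/r(R_\alpha)$ equals $s(\mathcal L_K)$.

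For a fixed $\alpha>s(\mathcal L_K)$, Perron--Frobenius for $R_\alpha$ tells you only that $\lambda_0:=\alpha-1/r(R_\alpha)$ is a \emph{real} eigenvalue of $\mathcal L_K$ at minimal distance from $\alpha$ among the points of $\sigma(\mathcal L_K)$. It does not say that this nearest point is $s(\mathcal L_K)$. Your sentence ``the largest modulus is $(\alpha-s(\mathcal L_K))^{-1}$'' is equivalent to $s(\mathcal L_K)\in\sigma(\mathcal L_K)$, which is exactly what must be shown, so the justification is circular.

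As written, nothing excludes the eigenvalues of maximal real part being non-real, say $s\pm i\omega$, while a real eigenvalue $\lambda_0<s$ sits closer to $\alpha$; this happens whenever $(\alpha-\lambda_0)^2<(\alpha-s)^2+\omega^2$. Then you only obtain $\mathcal L_K(X)=\lambda_0X$ with $\lambda_0$ possibly negative, and Step 3 breaks, for instance when $s(\mathcal L_K)=0$. Your Brouwer variant has the same problem, since it only reproduces the Perron--Frobenius conclusion for $R_\alpha$.

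The repair is short and uses finiteness of the spectrum. Write $s:=s(\mathcal L_K)$. Let $g>0$ be the gap between $s$ and the real parts of the eigenvalues with $\operatorname{Re}\lambda<s$; if there are none, any $\alpha>s$ works. Let $\omega$ be the largest $|\operatorname{Im}\lambda|$ over the eigenvalues with $\operatorname{Re}\lambda=s$.

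Eigenvalues on the line $\operatorname{Re}\lambda=s$ satisfy $|\alpha-\lambda|\le(\alpha-s)+\omega^2/(2(\alpha-s))$. All other eigenvalues satisfy $|\alpha-\lambda|\ge\alpha-s+g$. Hence for $\alpha>s+\omega^2/(2g)$, every nearest point of $\sigma(\mathcal L_K)$ to $\alpha$ has real part $s$. The real nearest point supplied by Perron--Frobenius must therefore be $s$ itself.

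Alternatively, apply Perron--Frobenius to $e^{t\mathcal L_K}$ with $t>0$ chosen so that $t|\operatorname{Im}\lambda|<\pi$ for all $\lambda\in\sigma(\mathcal L_K)$. This forces $s\in\sigma(\mathcal L_K)$ and makes $\ker(e^{t\mathcal L_K}-e^{ts}\mathcal I_{\mathbb S^n})=\ker(\mathcal L_K-s\mathcal I_{\mathbb S^n})$.

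With either fix, your rearrangement gives $\mathcal L_K(X)=s(\mathcal L_K)X$ with $0\ne X\ge0$, and Step 3 then goes through.
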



For \(\bar r>0\), \(c\in\mathbb R\), and
\(K\in\mathbb R^{m\times n}\), define
\begin{align}
\mathcal L^{\rm a}_{K,c}(Z)
:={}&
\left(A-\frac{\bar r}{2}I_n+cI_n-BK\right)Z \nonumber\\
&+Z\left(A-\frac{\bar r}{2}I_n+cI_n-BK\right)^\top \nonumber\\
&+(C-DK)Z(C-DK)^\top.
\label{eq:aux-operator}
\end{align}

\begin{proposition}[Exact translation of the stochastic Lyapunov spectrum]
\label{prop:operator-translation}
For every \(\bar r>0\), \(c\in\mathbb R\), and
\(K\in\mathbb R^{m\times n}\),
\begin{equation}
\mathcal L^{\rm a}_{K,c}
=
\mathcal L_K+(-\bar r+2c)\mathcal I_{\mathbb S^n}.
\label{eq:operator-translation}
\end{equation}
Consequently,
\begin{align}
\sigma\bigl(\mathcal L^{\rm a}_{K,c}\bigr)
&=
\sigma(\mathcal L_K)-\bar r+2c,
\label{eq:spectrum-translation}\\
s(\mathcal L_K)
&=
s\bigl(\mathcal L^{\rm a}_{K,c}\bigr)+\bar r-2c.
\label{eq:abscissa-translation}
\end{align}
\end{proposition}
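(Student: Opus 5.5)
The plan is a direct computation, followed by the standard fact that adding a multiple of the identity to an operator shifts its spectrum by that same amount. Set $\alpha:=-\bar r/2+c$, so that the drift matrix in \eqref{eq:aux-operator} is $A-\frac{\bar r}{2}I_n+cI_n-BK=A_K+\alpha I_n$. The diffusion matrix is $C-DK=C_K$, the same as in $\mathcal L_K$.

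\textbf{Step 1: the operator identity.} For arbitrary $Z\in\mathbb S^n$, expand
\[
(A_K+\alpha I_n)Z+Z(A_K+\alpha I_n)^\top+C_KZC_K^\top
=A_KZ+ZA_K^\top+C_KZC_K^\top+2\alpha Z .
\]
The right-hand side equals $\mathcal L_K(Z)+(-\bar r+2c)\mathcal I_{\mathbb S^n}(Z)$, using \eqref{eq:LK-def}. This holds for every $Z$, which gives \eqref{eq:operator-translation}. The only point to watch is that both $\alpha I_n Z$ and $Z(\alpha I_n)^\top$ contribute, so the shift is $2\alpha$ and not $\alpha$.

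\textbf{Step 2: spectrum translation.} Both operators act on the finite-dimensional real space $\mathbb S^n$, and their spectra are taken in $\mathscr C$, for example via the complexification or the matrix representation in a basis. Write $\beta:=-\bar r+2c$. For $\lambda\in\mathscr C$ we have
\[
\mathcal L^{\rm a}_{K,c}-\lambda\mathcal I_{\mathbb S^n}=\mathcal L_K-(\lambda-\beta)\mathcal I_{\mathbb S^n}.
\]
The left side is non-invertible exactly when the right side is. Hence $\lambda\in\sigma(\mathcal L^{\rm a}_{K,c})$ if and only if $\lambda-\beta\in\sigma(\mathcal L_K)$, which is \eqref{eq:spectrum-translation}. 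Equivalently, in any basis the matrix of $\mathcal L^{\rm a}_{K,c}$ is the matrix of $\mathcal L_K$ plus $\beta I$, and the characteristic polynomial shifts accordingly.

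\textbf{Step 3: spectral abscissa.} Since $\beta$ is real, $\operatorname{Re}(\lambda+\beta)=\operatorname{Re}\lambda+\beta$. Taking the maximum over the finite spectrum gives $s(\mathcal L^{\rm a}_{K,c})=s(\mathcal L_K)-\bar r+2c$, which rearranges to \eqref{eq:abscissa-translation}.

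None of these steps is a real obstacle. The only care needed is in Step 1, where the factor $2$ must be tracked, and in Step 2, where the spectrum of an operator on the real space $\mathbb S^n$ should be interpreted via its complexification, consistent with how $s(\mathcal L_K)$ is used in Definition~\ref{def:ms-stabilizingcontroller-feedback}.
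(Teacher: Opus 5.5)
Your proposal is correct and follows the paper's proof: the paper likewise computes $\mathcal L^{\rm a}_{K,c}(Z)-\mathcal L_K(Z)=(-\bar r+2c)Z$ for every $Z\in\mathbb S^n$, then derives the spectrum and abscissa shifts from the fact that adding a multiple of $\mathcal I_{\mathbb S^n}$ translates the spectrum. Your resolvent argument in Step~2 and the complexification remark just spell out what the paper states in one line; one cosmetic point is that your symbols $\alpha$ and $\beta$ clash with the paper's step sizes $\alpha_k$ and margin $\beta$, so rename them in a write-up.
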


\begin{proof}
For every \(Z\in\mathbb S^n\),
\[
\mathcal L^{\rm a}_{K,c}(Z)-\mathcal L_K(Z)
=
(-\bar r+2c)Z.
\]
This proves \eqref{eq:operator-translation}. Since the identity operator
\(\mathcal I_{\mathbb S^n}\) commutes with every linear operator on
\(\mathbb S^n\), \eqref{eq:spectrum-translation} and
\eqref{eq:abscissa-translation} follow immediately.
\end{proof}

\section{Model-based PI design via spectral criteria}\label{IV}
In this section, we propose a model-based two-phase algorithm to solve Problem~\ref{prob:indefinite-slq}. In Phase~I, auxiliary weights $Q_a>0,R_a>0$ are adopted to find a stabilizing control policy satisfying $\sigma(\mathcal L_{K_{\rm a}^k}) \subset \mathscr{C}^{-}$.  
Then, in Phase~II, we revert to the original indefinite weights $Q, R$ in \eqref{eq:sare}, and initialize the PI with the stabilizing gain $K^\dagger:=K_a^k$ obtained in Phase~I to iteratively approximate the optimal solution. The theoretical results in this section will guide the model-free design in the next section.
We impose the following assumption.

\begin{asm}[Model-based spectral bound]
\label{ass:model-based-shift}
For the model-based Phase~I design, let 
\begin{equation}
r_{\max}:=\max\left\{0,\,
s\!\left(\mathcal L_{K_{\rm a}^0}\right)
\right\}, \label{eq:rmax}
\end{equation}
and choose
\begin{equation}
\bar r:=r_{\max}+\beta,\quad \beta>0. \label{eq:rbar}
\end{equation}
\end{asm}
\begin{lemma}[Initial auxiliary stability]
\label{lem:initial-auxiliary-stability}
Let \(K_{\rm a}^0\in\mathbb R^{m\times n}\) be arbitrary and 
choose \(\beta>0\),  \(\alpha_0>0\) such that \(2\alpha_0<\beta\). Let
\(c_0:=\alpha_0\). Then
\[
s\!\left(
\mathcal L^{\rm a}_{K_{\rm a}^0,c_0}
\right)<0.
\]
\end{lemma}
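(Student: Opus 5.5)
The argument reduces to Proposition~\ref{prop:operator-translation} together with the choice of $\bar r$ fixed in Assumption~\ref{ass:model-based-shift}. We apply the spectral translation identity \eqref{eq:abscissa-translation} with $K=K_{\rm a}^0$ and $c=c_0=\alpha_0$. This gives
\[
s\!\left(\mathcal L^{\rm a}_{K_{\rm a}^0,c_0}\right)=s\!\left(\mathcal L_{K_{\rm a}^0}\right)-\bar r+2\alpha_0 .
\]
It therefore suffices to show that the right-hand side is negative.

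First, by the definition \eqref{eq:rmax} of $r_{\max}$ as a maximum, we have $s(\mathcal L_{K_{\rm a}^0})\le r_{\max}$. This holds regardless of whether $K_{\rm a}^0$ is stabilizing.

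Second, substituting $\bar r=r_{\max}+\beta$ from \eqref{eq:rbar} yields
\[
s\!\left(\mathcal L^{\rm a}_{K_{\rm a}^0,c_0}\right)\le r_{\max}-r_{\max}-\beta+2\alpha_0=2\alpha_0-\beta<0,
\]
where the last inequality is exactly the hypothesis $2\alpha_0<\beta$.

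The only points needing care are the following. First, $s(\mathcal L_{K_{\rm a}^0})$ is a finite real number, since $\mathcal L_{K_{\rm a}^0}$ is a linear operator on the finite-dimensional space $\mathbb S^n$ and so has a nonempty finite spectrum. Second, the lemma implicitly uses the $\bar r$ fixed in Assumption~\ref{ass:model-based-shift}, which I would state explicitly at the start. There is no genuine obstacle.
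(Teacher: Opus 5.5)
Your proof is correct and is the same argument as the paper's: apply \eqref{eq:abscissa-translation} with $K=K_{\rm a}^0$ and $c=\alpha_0$, bound $s(\mathcal L_{K_{\rm a}^0})\le r_{\max}$, and use $\bar r=r_{\max}+\beta$ together with $2\alpha_0<\beta$. Your explicit note that the lemma relies on Assumption~\ref{ass:model-based-shift}, with $r_{\max}$ computed from the same $K_{\rm a}^0$, makes clear a dependence that the paper's statement leaves implicit.
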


\begin{proof}
By Proposition~\ref{prop:operator-translation},
\[
s\!\left(
\mathcal L^{\rm a}_{K_{\rm a}^0,c_0}
\right)
=
s\!\left(\mathcal L_{K_{\rm a}^0}\right)
-\bar r+2\alpha_0\le r_{\rm max}-\bar r+2\alpha_0<0.
\]
\end{proof}

For the auxiliary design in Phase~I, the cumulative factor is denoted by
\[
c_k:=\sum_{j=0}^{k}\alpha_j,\quad c_0=\alpha_0>0.
\]
At iteration \(k\), define the auxiliary closed-loop operator as $\mathcal L^{\rm a}_{K_{\rm a}^k,c_k}.$

The auxiliary weights \(Q_{\rm a}, R_{\rm a}>0\) are used throughout Phase I. The original indefinite weights \(Q\) and \(R\) are used only after a stabilizing gain $K^{\dagger}$ has been obtained, i.e. Phase~II.

We now propose a two-phase model-based  algorithm.
\begin{algorithm}[t]
\caption{Two-phase model-based PI for indefinite SLQ control}
\label{alg:model-based}
\begin{algorithmic}[1]
\State \textbf{Input:} \(A,B,C,D\); original weights \(Q=Q^\top,R=R^\top\); auxiliary weights \(Q_{\rm a}>0,R_{\rm a}>0\); \(\beta>0\), \(\alpha_0>0\) with \(2\alpha_0<\beta\); tolerance \(\varepsilon\ge0\).
\State \textbf{Output:} An initial stabilizer \(K^\dagger\), whenever the Phase-I stopping condition is reached, and the stabilizing solution \((P^*,K^*)\) of the original SARE \eqref{eq:sare}.
\State Set \(\bar r=r_{\max}+\beta\), \(K_{\rm a}^0=0\), \(c_0=\alpha_0\), and \(k=0\), compute \(r_{\max}=\max\{s(\mathcal L_{K_{\rm a}^0}),0\}\).
\Statex \textbf{Phase I: initial stabilizer $K^\dagger$ construction.}
\While{\(\bar r-2c_k\ge-\varepsilon\)}
    \State Solve \(P_{\rm a}^k\) from \eqref{eq:auxiliary-PE}.
    \State Update \(K_{\rm a}^{k+1}\) from \eqref{eq:auxiliary-PI}.
    \State Choose \(\alpha_{k+1}\) according to \eqref{eq:model-based-alpha}.
    \State Set \(c_{k+1}=c_k+\alpha_{k+1}\), and \(k\gets k+1\).
\EndWhile
\State Set \(K^\dagger=K_{\rm a}^k\).
\Statex \textbf{Phase II: original indefinite SLQ problem.}
\State Set \(K^0=K^\dagger\) and \(i=0\).
\Repeat
    \State Solve \eqref{eq:indefinite-PE} for $P^i$.
    \State Update $K^{i+1}$ with \eqref{eq:indefinite-PI}.
    \State Set \(i\gets i+1\).
\Until{the prescribed convergence criterion is satisfied}
\State \Return \(P^*=P^i\), \(K^*=K^i\).
\end{algorithmic}
\end{algorithm}

\begin{lemma}[Auxiliary spectrum-assignment PI]
\label{lem:auxiliary-spectrum-assignment}
Suppose that Assumption~\ref{ass:model-based-shift} holds. Let
\(Q_{\rm a}=Q_{\rm a}^{\top}>0\) and
\(R_{\rm a}=R_{\rm a}^{\top}>0\). Initialize
\(K_{\rm a}^{0}=0\) and \(c_0=\alpha_0\), where
\(0<2\alpha_0<\beta\), and let \(\varepsilon\ge0\). For
\(k=0,1,2,\ldots\), let \(P_{\rm a}^{k}=P_{\rm a}^{k\top}\), solve
\begin{equation}
\begin{aligned}
0={}&
\left(A-\frac{\bar r}{2}I_n+c_kI_n-BK_{\rm a}^k\right)^\top
P_{\rm a}^k\\
&+P_{\rm a}^k
\left(A-\frac{\bar r}{2}I_n+c_kI_n-BK_{\rm a}^k\right)\\
&+(C-DK_{\rm a}^k)^\top P_{\rm a}^k(C-DK_{\rm a}^k)\\
&+Q_{\rm a}+(K_{\rm a}^k)^\top R_{\rm a}K_{\rm a}^k,
\end{aligned}
\label{eq:auxiliary-PE}
\end{equation}
and update
\begin{equation}
K_{\rm a}^{k+1}
=
\left(R_{\rm a}+D^\top P_{\rm a}^kD\right)^{-1}
\left(B^\top P_{\rm a}^k+D^\top P_{\rm a}^kC\right).
\label{eq:auxiliary-PI}
\end{equation}
Fix \(\vartheta\in(0,1)\), and select
\begin{equation}
\alpha_{k+1}
=
\frac{\vartheta\lambda_{\min}(Q_{\rm a})}
{2\lambda_{\max}(P_{\rm a}^{k})},
\quad
c_{k+1}=c_k+\alpha_{k+1}.
\label{eq:model-based-alpha}
\end{equation}
Then the following statements hold.

\begin{enumerate}
\item For every \(k\ge0\),
\begin{align}
s\bigl(\mathcal L^{\rm a}_{K_{\rm a}^k,c_k}\bigr)&<0,
\label{eq:auxiliary-stability-all-k}\\
s\bigl(\mathcal L_{K_{\rm a}^k}\bigr)&<\bar r-2c_k.
\label{eq:original-spectrum-bound}
\end{align}
Moreover, the step length in \eqref{eq:model-based-alpha} satisfies
\begin{equation}
0<\alpha_{k+1}
<
-\frac12s\bigl(\mathcal L^{\rm a}_{K_{\rm a}^{k+1},c_k}\bigr).
\label{eq:model-based-alpha-feasibility}
\end{equation}
Consequently, if the stopping condition
\begin{equation}
\bar r-2c_{k_\star}<-\varepsilon\leq 0
\label{eq:phase-I-stop}
\end{equation}
holds for an index \(k_\star\), then $K^\dagger:=K_{\rm a}^{k_\star}\in\mathscr K_{\rm ms},
\
s(\mathcal L_{K^\dagger})<-\varepsilon.$

\end{enumerate}
\end{lemma}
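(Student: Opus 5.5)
The plan is to prove \eqref{eq:auxiliary-stability-all-k} by induction on \(k\), and to obtain the step-length bound \eqref{eq:model-based-alpha-feasibility} inside the inductive step. The bound \eqref{eq:original-spectrum-bound} and the stopping conclusion then follow from Proposition~\ref{prop:operator-translation}.

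\textbf{Base case.} For \(k=0\), Lemma~\ref{lem:initial-auxiliary-stability} with \(K_{\rm a}^0=0\) and \(c_0=\alpha_0\) gives \(s(\mathcal L^{\rm a}_{K_{\rm a}^0,c_0})<0\).

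\textbf{Inductive step: positivity of \(P_{\rm a}^k\).} Assume \(s(\mathcal L^{\rm a}_{K_{\rm a}^k,c_k})<0\). The key observation is that \(\mathcal L^{\rm a}_{K,c}\) is exactly the operator \(\mathcal L_K\) of the modified system with drift matrix \(\tilde A:=A-\frac{\bar r}{2}I_n+cI_n\) and unchanged \(B,C,D\). Hence Lemmas~\ref{lem:policy-identity}, \ref{lem:positive-lyapunov-inverse} and \ref{lem:instability-certificate} apply verbatim to this modified system, with weights \(Q_{\rm a},R_{\rm a}\).

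Applying Lemma~\ref{lem:positive-lyapunov-inverse} with \(Y=Q_{\rm a}+(K_{\rm a}^k)^\top R_{\rm a}K_{\rm a}^k>0\) gives a unique solution \(P_{\rm a}^k\ge0\) of \eqref{eq:auxiliary-PE}. I would upgrade this to \(P_{\rm a}^k>0\) using the integral representation \(x_0^\top P_{\rm a}^kx_0=\mathbb E\int_0^\infty x^\top Yx\,\mathrm dt\) along the stable auxiliary closed loop. Since \(Y>0\) and \(x(0)=x_0\neq0\), this is strictly positive. Consequently \(G_{\rm a}:=R_{\rm a}+D^\top P_{\rm a}^kD>0\), so \eqref{eq:auxiliary-PI} is well defined, \(\lambda_{\max}(P_{\rm a}^k)>0\), and \(\alpha_{k+1}>0\).

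\textbf{Inductive step: a Lyapunov inequality for the new gain.} Apply Lemma~\ref{lem:policy-identity} to the modified system at the fixed shift \(c_k\), with \(K=K_{\rm a}^k\) and \(K(P_{\rm a}^k)=K_{\rm a}^{k+1}\). Using \eqref{eq:auxiliary-PE}, this yields
\[
(\mathcal L^{\rm a}_{K_{\rm a}^{k+1},c_k})^*(P_{\rm a}^k)+Q_{\rm a}+(K_{\rm a}^{k+1})^\top R_{\rm a}K_{\rm a}^{k+1}
=-(K_{\rm a}^k-K_{\rm a}^{k+1})^\top G_{\rm a}(K_{\rm a}^k-K_{\rm a}^{k+1})\le0 .
\]
Therefore
\[
(\mathcal L^{\rm a}_{K_{\rm a}^{k+1},c_k})^*(P_{\rm a}^k)\le -Q_{\rm a}\le -\gamma P_{\rm a}^k,
\qquad \gamma:=\frac{\lambda_{\min}(Q_{\rm a})}{\lambda_{\max}(P_{\rm a}^k)} .
\]

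\textbf{Inductive step: converting the inequality into a spectral bound.} This is the step I expect to be the main obstacle, since it needs a strict inequality with an explicit margin. Fix \(\gamma'\in(0,\gamma)\). Suppose, for contradiction, that \(s(\mathcal L^{\rm a}_{K_{\rm a}^{k+1},c_k})\ge-\gamma'\).

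By Proposition~\ref{prop:operator-translation}, the operator \(\mathcal L^{\rm a}_{K_{\rm a}^{k+1},c_k}+\gamma'\mathcal I_{\mathbb S^n}\) is again of the form \(\mathcal L^{\rm a}_{K_{\rm a}^{k+1},c_k+\gamma'/2}\), and its spectral abscissa is nonnegative. Lemma~\ref{lem:instability-certificate}, applied to the correspondingly shifted system, then provides \(\mu\ge0\) and a nonzero \(X\ge0\) with
\[
\mathcal L^{\rm a}_{K_{\rm a}^{k+1},c_k}(X)=(\mu-\gamma')X .
\]
Pair this with \(P_{\rm a}^k\) in the trace inner product:
\[
(\mu-\gamma')\operatorname{Tr}(XP_{\rm a}^k)=\operatorname{Tr}\bigl(X(\mathcal L^{\rm a}_{K_{\rm a}^{k+1},c_k})^*(P_{\rm a}^k)\bigr)\le-\gamma\operatorname{Tr}(XP_{\rm a}^k).
\]
Because \(P_{\rm a}^k>0\) and \(X\ge0\), \(X\neq0\), we have \(\operatorname{Tr}(XP_{\rm a}^k)>0\). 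Dividing gives \(\mu\le\gamma'-\gamma<0\), a contradiction. Hence \(s(\mathcal L^{\rm a}_{K_{\rm a}^{k+1},c_k})<-\gamma'\) for every \(\gamma'<\gamma\), that is,
\[
s(\mathcal L^{\rm a}_{K_{\rm a}^{k+1},c_k})\le-\gamma .
\]

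\textbf{Inductive step: closing the induction.} Since \(\vartheta<1\), we have \(2\alpha_{k+1}=\vartheta\gamma<\gamma\), which is exactly \eqref{eq:model-based-alpha-feasibility}. By Proposition~\ref{prop:operator-translation},
\[
s(\mathcal L^{\rm a}_{K_{\rm a}^{k+1},c_{k+1}})=s(\mathcal L^{\rm a}_{K_{\rm a}^{k+1},c_k})+2\alpha_{k+1}<0,
\]
which closes the induction.

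\textbf{Remaining claims.} Statement \eqref{eq:original-spectrum-bound} follows from \eqref{eq:abscissa-translation}: \(s(\mathcal L_{K_{\rm a}^k})=s(\mathcal L^{\rm a}_{K_{\rm a}^k,c_k})+\bar r-2c_k<\bar r-2c_k\). If the stopping condition \eqref{eq:phase-I-stop} holds at \(k_\star\), this gives \(s(\mathcal L_{K^\dagger})<-\varepsilon\le0\), so \(K^\dagger\in\mathscr K_{\rm ms}\).
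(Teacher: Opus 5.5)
Your proposal is correct, and its skeleton is exactly the paper's: the base case from Lemma~\ref{lem:initial-auxiliary-stability}, positivity of $P_{\rm a}^k$, the policy identity at fixed $c_k$, the margin $(\mathcal L^{\rm a}_{K_{\rm a}^{k+1},c_k})^*(P_{\rm a}^k)\le-\gamma P_{\rm a}^k$, and the final translation by $2\alpha_{k+1}$.

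You differ only in the step you flagged as the obstacle, namely turning this inequality into $s(\mathcal L^{\rm a}_{K_{\rm a}^{k+1},c_k})\le-\gamma$.

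\emph{The paper's route.} The paper argues dynamically. It applies It\^{o}'s formula to $x^\top P_{\rm a}^k x$ along the auxiliary closed loop and obtains $\mathbb E[x(t)^\top P_{\rm a}^k x(t)]\le e^{-\gamma t}x_0^\top P_{\rm a}^k x_0$. It then reads off the spectral bound directly. The passage from a mean-square decay rate to a spectral-abscissa bound is left implicit. Filling it in uses $P_{\rm a}^k>0$, the identity $\mathbb E[x(t)x(t)^\top]=e^{t\mathcal L^{\rm a}_{K_{\rm a}^{k+1},c_k}}(x_0x_0^\top)$, and the fact that rank-one matrices span $\mathbb S^n$.

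\emph{Your route.} You argue algebraically. Raising $c$ by $\gamma'/2$ turns the hypothesis $s\ge-\gamma'$ into instability of a modified system. Lemma~\ref{lem:instability-certificate}, which holds for arbitrary coefficient matrices, then supplies a nonzero $X\ge0$ with $\mathcal L^{\rm a}_{K_{\rm a}^{k+1},c_k}(X)=(\mu-\gamma')X$. Pairing with $P_{\rm a}^k>0$ forces $\mu<0$, a contradiction. This is the same duality device the paper itself uses in the proof of Theorem~\ref{thm:indefinite-PI}.

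\emph{What each buys.} Your argument stays inside the operator framework and avoids both stochastic calculus and the unstated semigroup-to-spectrum step. The paper's argument is more interpretable, since it exhibits an explicit mean-square decay rate for the closed-loop trajectories.

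You also justify the strict positivity $P_{\rm a}^k>0$ via the integral representation. Both routes need this, but the paper only asserts it, and Lemma~\ref{lem:positive-lyapunov-inverse} as stated gives merely $P\ge0$.
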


\begin{proof}
See Appendix~\ref{auxiliary-spectrum-assignment}.
\end{proof}
\begin{proposition}\label{alpha_bound+Nmax}
     Suppose that the Phase~I value matrices are uniformly
bounded, i.e., there exists
\(\bar p>0\) such that
\begin{equation}
\lambda_{\max}(P_{\rm a}^k)\le\bar p.
\label{eq:uniform-PhaseI-value-bound}
\end{equation}
 Then \(\alpha_{k+1}\) has the uniform
positive lower bound
\begin{equation}
\alpha_{k+1}\ge
\underline\alpha:=
\frac{\vartheta\lambda_{\min}(Q_{\rm a})}{2\bar p}>0.
\label{eq:alpha-lower-bound}
\end{equation}
Thus, Phase~I reaches \eqref{eq:phase-I-stop} after finitely many
iterations. More precisely, if
\(c_0\le(\bar r+\varepsilon)/2\), then the stopping condition is reached
after at most
\begin{equation}
N_{\max}:=
\left\lfloor
\frac{(\bar r+\varepsilon)/2-c_0}{\underline\alpha}
\right\rfloor+1
\label{eq:PhaseI-iteration-bound}
\end{equation}
additional iterations.
\end{proposition}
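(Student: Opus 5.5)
The argument is elementary once Lemma~\ref{lem:auxiliary-spectrum-assignment} is available. We take that lemma as given: it guarantees that every $P_{\rm a}^k$ is well defined, and that the iteration is meaningful at each step.

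\textbf{Step 1: the lower bound on $\alpha_{k+1}$.} We first check that $P_{\rm a}^k>0$. Since $s(\mathcal L^{\rm a}_{K_{\rm a}^k,c_k})<0$ by \eqref{eq:auxiliary-stability-all-k}, the auxiliary closed loop plays the role of the stable operator in Lemma~\ref{lem:positive-lyapunov-inverse}. The right-hand term $Q_{\rm a}+(K_{\rm a}^k)^\top R_{\rm a}K_{\rm a}^k$ is at least $Q_{\rm a}>0$, so that lemma (together with strict positivity of the forcing term) gives $P_{\rm a}^k\ge0$, and in fact $P_{\rm a}^k>0$. Hence $0<\lambda_{\max}(P_{\rm a}^k)\le\bar p$. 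Substituting this into \eqref{eq:model-based-alpha} immediately yields
\[
\alpha_{k+1}=\frac{\vartheta\lambda_{\min}(Q_{\rm a})}{2\lambda_{\max}(P_{\rm a}^k)}\ge\frac{\vartheta\lambda_{\min}(Q_{\rm a})}{2\bar p}=\underline\alpha>0 .
\]

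\textbf{Step 2: counting iterations.} By induction, $c_{k}=c_0+\sum_{j=1}^{k}\alpha_j\ge c_0+k\underline\alpha$. The stopping rule \eqref{eq:phase-I-stop} is equivalent to $c_k>(\bar r+\varepsilon)/2$. It therefore suffices that $c_0+k\underline\alpha>(\bar r+\varepsilon)/2$, that is, $k>\bigl((\bar r+\varepsilon)/2-c_0\bigr)/\underline\alpha$. The smallest integer with this property is at most $\lfloor((\bar r+\varepsilon)/2-c_0)/\underline\alpha\rfloor+1=N_{\max}$; the hypothesis $c_0\le(\bar r+\varepsilon)/2$ ensures the floor argument is nonnegative. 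If the condition happens to hold at an earlier index, the loop simply terminates sooner. In either case the stopping condition is met within $N_{\max}$ additional iterations, and the conclusion $K^\dagger\in\mathscr K_{\rm ms}$ then follows from the last assertion of Lemma~\ref{lem:auxiliary-spectrum-assignment}.

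\textbf{Main obstacle.} The only delicate point is confirming that $\lambda_{\max}(P_{\rm a}^k)>0$, so that $\alpha_{k+1}$ is well defined and positive. Beyond that, the uniform bound \eqref{eq:uniform-PhaseI-value-bound} is an assumption rather than something we derive here, so the remaining work is just the arithmetic of the floor bound. One should also take care to distinguish a strict inequality from a non-strict one at the boundary case where $(\bar r+\varepsilon)/2-c_0$ is an exact multiple of $\underline\alpha$; the ``$+1$'' in $N_{\max}$ is exactly what handles this case.
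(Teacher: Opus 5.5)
Your proposal is correct and follows essentially the same argument as the paper: the uniform bound on $\lambda_{\max}(P_{\rm a}^k)$ gives $\alpha_{k+1}\ge\underline\alpha$, hence $c_k\ge c_0+k\underline\alpha$, and taking $k=N_{\max}$ forces $c_k>(\bar r+\varepsilon)/2$, which is the stopping condition \eqref{eq:phase-I-stop}. You also check explicitly that $P_{\rm a}^k>0$, so that $\alpha_{k+1}$ is well defined; the paper establishes this inside the proof of Lemma~\ref{lem:auxiliary-spectrum-assignment} rather than in this proposition.
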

\begin{proof}
See Appendix~\ref{auxiliary-spectrum-assignment}.
\end{proof}
\begin{remark}
\label{rem:fixed-K-translation}
Lemma~\ref{lem:auxiliary-spectrum-assignment} is used to construct a stabilizer: whenever \eqref{eq:phase-I-stop} is reached, the returned gain is a stabilizer of the original system. The spectrum-translation identity \eqref{eq:spectrum-translation} is used only for the same fixed feedback gain. More precisely, the auxiliary policy update first changes \(K_{\rm a}^k\) to \(K_{\rm a}^{k+1}\) while keeping \(c_k\) fixed; the cumulative factor is subsequently changed from \(c_k\) to \(c_{k+1}\) while keeping \(K_{\rm a}^{k+1}\) fixed. Therefore, the variation of the feedback gain across policy iterations does not affect the validity of Proposition \ref{prop:operator-translation}. The diffusion term \((C-DK)Z(C-DK)^\top\) is retained identically in both operators and hence cancels exactly in \eqref{eq:operator-translation}.
\end{remark}

\begin{remark}[Roles of the two phases]
\label{rem:two-phase-implementation}
The stabilizing property of \(K^\dagger\) is entirely determined by the closed-loop dynamics, i.e., the criterion \(s(\mathcal L_{K^\dagger})<0\) depends only on the closed-loop dynamics \((A-BK^\dagger,C-DK^\dagger)\). Phase~I uses the auxiliary positive definite weights \(Q_{\rm a},R_{\rm a}\) only to construct a gain \(K^\dagger\in\mathscr K_{\rm ms}\). Once \(K^\dagger\) is obtained, Phase II discards all auxiliary quantities \(\bar r\), \(c_k\), \(\alpha_k\), \(Q_{\rm a}\), and \(R_{\rm a}\), and uses \(K^\dagger\) to initialize the PI in Theorem~\ref{thm:indefinite-PI} to solve the indefinite SLQ Problem~\ref{prob:indefinite-slq}.
\end{remark}

We provide  Fig.~\ref{fig:operator-spectrum-translation} to illustrate the evolution of the closed-loop poles of both the auxiliary and the original operators over iterations.

\begin{figure*}[!t]
  \centering
\includegraphics[width=0.7\textwidth]{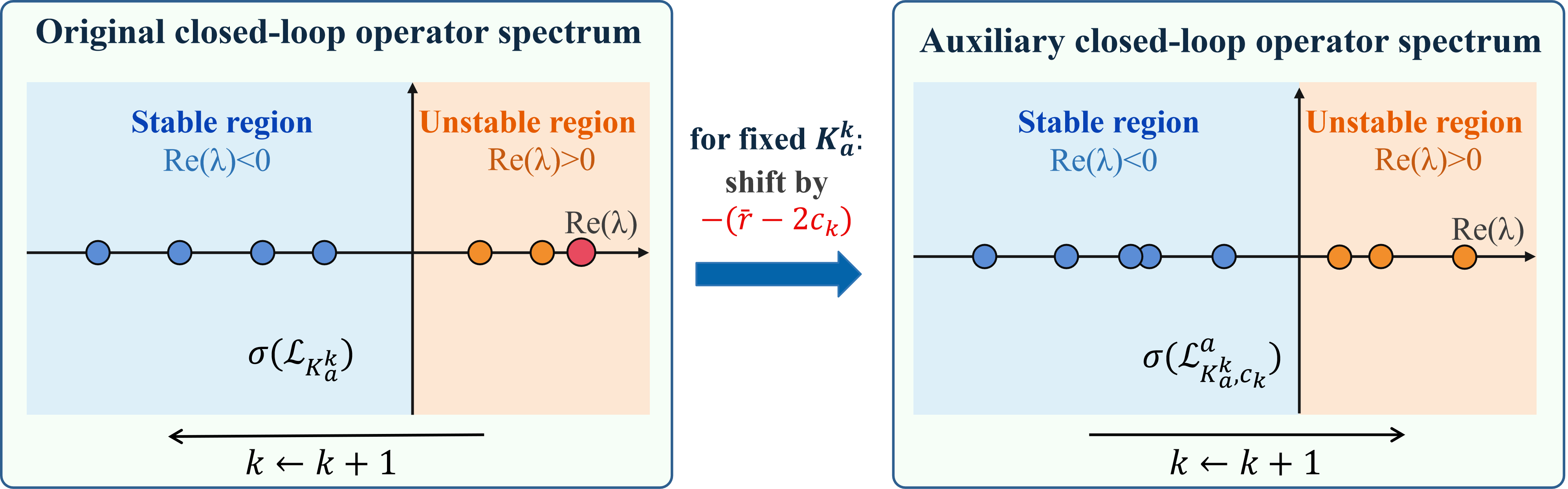}

\caption{Illustration of the operator-spectrum translation used in Phase~I.
  For a fixed feedback gain \(K_{\rm a}^k\), the original closed-loop operator
  \(\mathcal L_{K_{\rm a}^k}\) and the auxiliary closed-loop operator
  \(\mathcal L^{\rm a}_{K_{\rm a}^k,c_k}\) satisfy
  \(\mathcal L^{\rm a}_{K_{\rm a}^k,c_k}
  =\mathcal L_{K_{\rm a}^k}-(\bar r-2c_k)\mathcal I_{\mathbb S^n}\).
  Hence every point in the auxiliary spectrum is obtained by translating the
  corresponding point in the original closed-loop spectrum by
  \(-\left(\bar r-2c_k\right)\) along the real axis.}
  \label{fig:operator-spectrum-translation}
\end{figure*}

Fig.~\ref{fig:operator-spectrum-translation} shows the relation between the
original and auxiliary closed-loop operators at the same feedback gain
\(K_{\rm a}^k\). The left panel depicts the spectrum of
\(\mathcal L_{K_{\rm a}^k}\), whose spectral abscissa
\(s(\mathcal L_{K_{\rm a}^k})\) may lie in the open right half-plane. The right
panel depicts the spectrum of \(\mathcal L^{\rm a}_{K_{\rm a}^k,c_k}\), obtained
from the exact translation identity
\begin{equation}
\mathcal L^{\rm a}_{K_{\rm a}^k,c_k}
=\mathcal L_{K_{\rm a}^k}-(\bar r-2c_k)\mathcal I_{\mathbb S^n}.
\label{eq:fig-operator-translation}
\end{equation}
Consequently,
\begin{equation}
 s\big(\mathcal L_{K_{\rm a}^k}\big)
 =s\big(\mathcal L^{\rm a}_{K_{\rm a}^k,c_k}\big)+\bar r-2c_k.
\label{eq:fig-abscissa-relation}
\end{equation}
Phase~I maintains \(s(\mathcal L^{\rm a}_{K_{\rm a}^k,c_k})<0\) through the
auxiliary positive definite policy update. When the cumulative factor reaches
\(\bar r-2c_k<0\), \eqref{eq:fig-abscissa-relation} gives
\(s(\mathcal L_{K_{\rm a}^k})<0\). The gain \(K_{\rm a}^k\) is then a stabilizer for the original system \eqref{sys-k}.

\section{Model-free PI design via spectral criteria}\label{V}

A stabilizing feedback gain $K^{\dagger}$ is derived from the system matrices in the preceding section. This section develops its data-driven counterpart. Unhatted and hatted variables denote exact expectation-based and finite-sample estimates, respectively.

\subsection{Data-driven search for a sufficient spectral-shift parameter $\bar r$}\label{subsec:shift-search}

\begin{proposition}[Sufficient shift condition]
\label{prop:sufficient-shift}
Fix a candidate \(\bar r>0\), set \(K_{\rm a}^0=0\) and \(c_0=\alpha_0\), and define
\[
A_{{\rm a},0}(\bar r)
:=
A-\frac{\bar r}{2}I_n+\alpha_0I_n.
\]

We solve a symmetric matrix \(P_{\rm a}^0(\bar r)\) satisfying
\begin{equation}
A_{{\rm a},0}(\bar r)^\top P_{\rm a}^0
+
P_{\rm a}^0A_{{\rm a},0}(\bar r)
+
C^\top P_{\rm a}^0C
+
Q_{\rm a}
=0.
\label{eq:initial-auxiliary-lyapunov}
\end{equation}
If \(P_{\rm a}^0(\bar r)>0\), then $s(\mathcal L_0)<\bar r-2\alpha_0.$ In particular, \(\bar r>r_{\max}\).
\end{proposition}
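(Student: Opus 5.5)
The plan is to show first that the auxiliary operator \(\mathcal L^{\rm a}_{0,c_0}\) (that is, \eqref{eq:aux-operator} with \(K=0\) and \(c=c_0=\alpha_0\)) is stable. The stated bound on \(s(\mathcal L_0)\) then follows from the translation identity in Proposition~\ref{prop:operator-translation}. The key observation is that \eqref{eq:initial-auxiliary-lyapunov} says exactly that \((\mathcal L^{\rm a}_{0,c_0})^*(P_{\rm a}^0)+Q_{\rm a}=0\). Here the adjoint is taken with respect to the trace inner product \(\langle X,Y\rangle=\operatorname{Tr}(XY)\) on \(\mathbb S^n\). This adjointness follows from cyclicity of the trace:
\[
\operatorname{Tr}\bigl((A_{{\rm a},0}^\top P+PA_{{\rm a},0}+C^\top PC)X\bigr)=\operatorname{Tr}\bigl(P(A_{{\rm a},0}X+XA_{{\rm a},0}^\top+CXC^\top)\bigr).
\]

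\textbf{Step 1 (stability of the auxiliary operator).} Suppose, for contradiction, that \(s(\mathcal L^{\rm a}_{0,c_0})\ge0\). The operator \(\mathcal L^{\rm a}_{0,c_0}\) is the closed-loop Lyapunov operator \(\mathcal L_0\) of the modified system with drift matrix \(A_{{\rm a},0}(\bar r)\) in place of \(A\) and the same \(C\). Hence Lemma~\ref{lem:instability-certificate}, applied to that system with \(K=0\), gives \(\mu\ge0\) and a nonzero \(X=X^\top\ge0\) with \(\mathcal L^{\rm a}_{0,c_0}(X)=\mu X\). Pairing \eqref{eq:initial-auxiliary-lyapunov} with \(X\) and using the adjoint relation gives
\[
-\operatorname{Tr}(Q_{\rm a}X)=\operatorname{Tr}\bigl(P_{\rm a}^0\,\mathcal L^{\rm a}_{0,c_0}(X)\bigr)=\mu\operatorname{Tr}(P_{\rm a}^0X)\ge0.
\]
The last inequality holds because \(P_{\rm a}^0>0\), \(X\ge0\), and \(\mu\ge0\). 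On the other hand, \(Q_{\rm a}>0\) and \(X\ge0\) with \(X\neq0\) give \(\operatorname{Tr}(Q_{\rm a}X)>0\), which is a contradiction. Therefore \(s(\mathcal L^{\rm a}_{0,c_0})<0\).

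\textbf{Step 2 (translation).} By \eqref{eq:abscissa-translation} with \(K=0\) and \(c=\alpha_0\),
\[
s(\mathcal L_0)=s(\mathcal L^{\rm a}_{0,c_0})+\bar r-2\alpha_0<\bar r-2\alpha_0 .
\]

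\textbf{Step 3 (comparison with \(r_{\max}\)).} Since \(\alpha_0>0\), Step 2 gives \(s(\mathcal L_0)<\bar r\). By hypothesis \(\bar r>0\). Hence \(\bar r>\max\{0,s(\mathcal L_0)\}=r_{\max}\), where \(r_{\max}\) is defined in \eqref{eq:rmax} with \(K_{\rm a}^0=0\).

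The only delicate point is Step 1. One must justify that Lemma~\ref{lem:instability-certificate} applies to the shifted operator. It does, because that lemma holds for arbitrary system matrices, and \(\mathcal L^{\rm a}_{0,c_0}\) has precisely the structure \(\mathcal L_K\) for the pair \((A_{{\rm a},0}(\bar r),C)\). The argument also relies on the positivity \(P_{\rm a}^0>0\) (in fact \(P_{\rm a}^0\ge0\) suffices) to obtain the sign of \(\mu\operatorname{Tr}(P_{\rm a}^0X)\).
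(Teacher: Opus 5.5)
Your proof is correct and follows the paper's route. The Lyapunov equation \eqref{eq:initial-auxiliary-lyapunov} certifies stability of the shifted operator $\mathcal L^{\rm a}_{0,c_0}$, the translation identity \eqref{eq:abscissa-translation} converts this into $s(\mathcal L_0)<\bar r-2\alpha_0$, and $\bar r>r_{\max}$ follows. The only difference is in the first step: the paper cites the stochastic Lyapunov criterion (Lemma~2.1 of \cite{li2022stochastic}), whereas you derive it from Lemma~\ref{lem:instability-certificate} by the same trace-pairing contradiction used in the proof of Theorem~\ref{thm:indefinite-PI}, which keeps the argument self-contained and shows that $P_{\rm a}^0\ge0$ would already suffice.
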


\begin{proof}
Equation~\eqref{eq:initial-auxiliary-lyapunov} gives
\[
A_{{\rm a},0}(\bar r)^\top P_{\rm a}^0
+
P_{\rm a}^0A_{{\rm a},0}(\bar r)
+
C^\top P_{\rm a}^0C
=
-Q_{\rm a}<0.
\]
The stochastic Lyapunov criterion Lemma~2.1 in \cite{li2022stochastic} yields stability of
\((A_{{\rm a},0}(\bar r),C)\). Applying Proposition~\ref{prop:operator-translation} with \(K=0\) and \(c=\alpha_0\) gives
\[
s(\mathcal L_0)-\bar r+2\alpha_0<0.
\]
The stated inequality follows. If \(s(\mathcal L_0)\ge0\), then
\[
\bar r>s(\mathcal L_0)+2\alpha_0\ge r_{\max}.
\]
If \(s(\mathcal L_0)<0\), then \(r_{\max}=0<\bar r\).
\end{proof}
Proposition~\ref{prop:sufficient-shift} provides a characterization of the search for the candidate $\bar r$ in Phase I.   It determines a sufficient spectrum-shift parameter satisfying \(\bar r>r_{\max}\).

In the model-free numerical implementation, for finite Monte-Carlo data, let \(\widehat P_{\rm a}^0(\bar r)\) denote the least-squares estimate. The tests
\[
\lambda_{\min}\!\big(\widehat P_{\rm a}^0(\bar r)\big)\ge\tau_P,
\quad
\sigma_{\min}\!\big(\widehat\Theta_{{\rm a},A}^{0}(\bar r)\big)\ge\tau_\Theta,
\]
and
\[
\left\|
\widehat\Theta_{{\rm a},A}^{0}(\bar r)
\widehat\theta_{{\rm a},0}(\bar r)
-
\widehat b_{{\rm a},0}(\bar r)
\right\|_2
\le\tau_{\rm res}
\]
serve as acceptance criteria for the sequence $\bar r$ in the model-free numerical implementation. 
Here
\(\tau_P,\tau_\Theta,\tau_{\rm res}>0\) are prescribed numerical margins.

\subsection{Auxiliary policy evaluation from data in Phase~I}
\label{subsec:phase-I-regression}

For the \(k\)-th Phase~I iteration, define
\begin{align}
A_{\rm a}^k
&:=A-\frac{\bar r}{2}I_n-BK_{\rm a}^k+c_kI_n,
\label{eq:phase-I-Ak}\\
C_{\rm a}^k
&:=C-DK_{\rm a}^k,
\label{eq:phase-I-Ck}\\
\delta_k
&:=\frac{\bar r}{2}-c_k,
\label{eq:phase-I-delta}\\
v_{\rm a}^k(t)
&:=u(t)+K_{\rm a}^k x(t).
\label{eq:phase-I-virtual-input}
\end{align}
The state equation \eqref{sys-k} admits the decomposition
\begin{equation}
\mathrm dx
=
\big(A_{\rm a}^k x+Bv_{\rm a}^k+\delta_kx\big)\mathrm dt
+
\big(C_{\rm a}^k x+Dv_{\rm a}^k\big)\mathrm dw.
\label{eq:phase-I-split}
\end{equation}

Let \(P_{\rm a}^k\) solve the auxiliary policy-evaluation equation
\begin{equation}
(A_{\rm a}^k)^\top P_{\rm a}^k
+
P_{\rm a}^kA_{\rm a}^k
+
(C_{\rm a}^k)^\top P_{\rm a}^kC_{\rm a}^k
+
Q_{{\rm a},k}
=
0,
\label{eq:phase-I-PE}
\end{equation}
where
\begin{align}
Q_{{\rm a},k}
&:=Q_{\rm a}+(K_{\rm a}^k)^\top R_{\rm a}K_{\rm a}^k,
\label{eq:Qa-k}\\
\Gamma_{\rm a}^k
&:=D^\top P_{\rm a}^kD,
\label{eq:Gamma-a-k}\\
\widetilde K_{\rm a}^{k+1}
&:=B^\top P_{\rm a}^k+D^\top P_{\rm a}^kC
=
\big(R_{\rm a}+\Gamma_{\rm a}^k\big) K_{\rm a}^{k+1},
\label{eq:Ktilde-a-k}\\
\Xi_{\rm a}^{k+1}
&:=\widetilde K_{\rm a}^{k+1}
-\Gamma_{\rm a}^kK_{\rm a}^{k}.
\label{eq:Xi-a-k}
\end{align}
The policy update is
\begin{equation}
K_{\rm a}^{k+1}
=
\big(R_{\rm a}+\Gamma_{\rm a}^k\big)^{-1}
\left(\Xi_{\rm a}^{k+1}+\Gamma_{\rm a}^kK_{\rm a}^k\right).
\label{eq:Ka-update-data}
\end{equation}

Applying It\^{o}'s formula to \(x^\top P_{\rm a}^k x\), using
\eqref{eq:phase-I-split} and \eqref{eq:phase-I-PE}, and integrating over
\([t_\ell,t_{\ell+1}]\) yields
\begin{equation}\label{eq:phase-I-integral-identity}
\begin{split}
    &\mathbb E\!\left[
x(t_{\ell+1})^\top P_{\rm a}^k x(t_{\ell+1})
-
x(t_\ell)^\top P_{\rm a}^k x(t_\ell)
\right]\\
&=
\mathbb E\!\int_{t_\ell}^{t_{\ell+1}}
\bigg[
-x^\top Q_{{\rm a},k}x +2(v_{\rm a}^k)^\top\Xi_{\rm a}^{k+1}x\\
& \quad +(v_{\rm a}^k)^\top\Gamma_{\rm a}^k v_{\rm a}^k
+2\delta_kx^\top P_{\rm a}^k x
\bigg]\mathrm d\tau .
\end{split}
\end{equation}

Define
\begin{align}
\Delta\mathbb X_\ell
&:=
\mathbb E\!\left[
\operatorname{vecv}(x(t_{\ell+1}))
-
\operatorname{vecv}(x(t_\ell))
\right],
\label{eq:delta-X}\\
\mathbb I_{xx}
&:=
\left[
\mathbb E\!\int_{t_0}^{t_1}\operatorname{vecv}(x)\mathrm d\tau,
\ldots,
\mathbb E\!\int_{t_{s-1}}^{t_s}\operatorname{vecv}(x)\mathrm d\tau
\right]^\top,
\label{eq:Ixx}\\
\mathbb I_{{\rm a},xv}^{k}
&:=
\left[
\mathbb E\!\int_{t_0}^{t_1}x\otimes v_{\rm a}^k\mathrm d\tau,
\ldots,
\mathbb E\!\int_{t_{s-1}}^{t_s}x\otimes v_{\rm a}^k\mathrm d\tau
\right]^\top,
\label{eq:Iaxv}\\
\mathbb I_{{\rm a},vv}^{k}
&:=
\left[
\mathbb E\!\int_{t_0}^{t_1}\operatorname{vecv}(v_{\rm a}^k)\mathrm d\tau,
\ldots,
\mathbb E\!\int_{t_{s-1}}^{t_s}\operatorname{vecv}(v_{\rm a}^k)\mathrm d\tau
\right]^\top,
\label{eq:Iavv}\\
\Theta_{{\rm a},xx}^{k}
&:=
\left[
\Delta\mathbb X_0-2\delta_k\mathbb I_{xx,0},
\ldots,
\Delta\mathbb X_{s-1}-2\delta_k\mathbb I_{xx,s-1}
\right]^\top.
\label{eq:Theta-a-xx}
\end{align}
Here \(\mathbb I_{xx,\ell}\) denotes the \(\ell\)-th row block in
\(\mathbb I_{xx}\). Equation~\eqref{eq:phase-I-integral-identity} has the regression form
\begin{equation}
\Theta_{{\rm a},A}^{k}\theta_{{\rm a},k}
=
b_{{\rm a},k},
\label{eq:phase-I-regression}
\end{equation}
where
\begin{align}
\Theta_{{\rm a},A}^{k}
&:=
\left[
\Theta_{{\rm a},xx}^{k},
\;
-2\mathbb I_{{\rm a},xv}^{k},
\;
-\mathbb I_{{\rm a},vv}^{k}
\right],
\label{eq:Theta-a-A}\\
\theta_{{\rm a},k}
&:=
\operatorname{col}\!\big(
\operatorname{vech}(P_{\rm a}^{k}),
\operatorname{vec}(\Xi_{\rm a}^{k+1}),
\operatorname{vech}(\Gamma_{\rm a}^{k})
\big),
\label{eq:theta-a-k}\\
b_{{\rm a},k}
&:=
-\mathbb I_{xx}\operatorname{vech}(Q_{{\rm a},k}).
\label{eq:b-a-k}
\end{align}

Let $d_x:=\frac{n(n+1)}{2},\
d_u:=\frac{m(m+1)}{2},\
d:=d_x+mn+d_u.$
\[
\Delta X:=
\begin{bmatrix}
\Delta X_0^\top\\
\vdots\\
\Delta X_{s-1}^\top
\end{bmatrix}
\in\mathbb R^{s\times d_x}.
\]

\begin{asm}
\label{ass:phase-I-data-richness}
For every Phase~I iteration for which the expectation terms in the
regression equation are evaluated, the matrix
\[
\mathscr D_{{\rm a},k}
:=
\begin{bmatrix}
\Delta X &
I_{xx} &
I_{{\rm a},xv}^{k} &
I_{{\rm a},vv}^{k}
\end{bmatrix}
\]
has full column rank:
\begin{equation}
\operatorname{rank}(\mathscr D_{{\rm a},k})
=
2d_x+mn+d_u.
\label{eq:phase-I-data-richness}
\end{equation}
\end{asm}
\begin{proposition}
\label{prop:phase-I-identifiability}
Under Assumption~\ref{ass:phase-I-data-richness}, the matrix
\(\Theta_{{\rm a},A}^{k}\) in~\eqref{eq:phase-I-regression}
has full column rank \(d\). Consequently, the Phase~I regression
equation has the unique solution
\begin{equation}
\theta_{{\rm a},k}
=
\left[
(\Theta_{{\rm a},A}^{k})^\top
\Theta_{{\rm a},A}^{k}
\right]^{-1}
(\Theta_{{\rm a},A}^{k})^\top b_{{\rm a},k}.
\label{eq:phase-I-ls-solution}
\end{equation}
\end{proposition}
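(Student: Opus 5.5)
The plan is to write \(\Theta_{{\rm a},A}^{k}\) as the data matrix \(\mathscr D_{{\rm a},k}\) (identifying \(\Delta X\), \(I_{xx}\), \(I^k_{{\rm a},xv}\), \(I^k_{{\rm a},vv}\) with their expectation counterparts \(\Delta\mathbb X\), \(\mathbb I_{xx}\), \(\mathbb I^k_{{\rm a},xv}\), \(\mathbb I^k_{{\rm a},vv}\)) multiplied on the right by a fixed block matrix. Then the rank of \(\Theta_{{\rm a},A}^{k}\) follows from the rank of \(\mathscr D_{{\rm a},k}\) together with the injectivity of that block matrix. By \eqref{eq:Theta-a-xx} and \eqref{eq:Theta-a-A},
\[
\Theta_{{\rm a},A}^{k}
=
\mathscr D_{{\rm a},k}\,T_k,
\qquad
T_k:=
\begin{bmatrix}
I_{d_x} & 0 & 0\\
-2\delta_k I_{d_x} & 0 & 0\\
0 & -2I_{mn} & 0\\
0 & 0 & -I_{d_u}
\end{bmatrix}
\in\mathbb R^{(2d_x+mn+d_u)\times d}.
\]

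The first step is to verify that \(T_k\) has full column rank \(d\). Take \(T_k\operatorname{col}(y_1,y_2,y_3)=0\). The first block row gives \(y_1=0\), the third gives \(y_2=0\), and the fourth gives \(y_3=0\). The second block row only repeats \(y_1\), scaled by \(-2\delta_k\), so it imposes nothing new and the value of \(\delta_k\) (which may be zero or negative) is irrelevant. The second step uses Assumption~\ref{ass:phase-I-data-richness}: \(\mathscr D_{{\rm a},k}\) has full column rank, so it is injective. A composition of injective linear maps is injective, hence \(\Theta_{{\rm a},A}^{k}z=0\) implies \(T_kz=0\), which implies \(z=0\). Therefore \(\operatorname{rank}(\Theta_{{\rm a},A}^{k})=d\).

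The third step is the formula. Full column rank makes \((\Theta_{{\rm a},A}^{k})^\top\Theta_{{\rm a},A}^{k}\) positive definite, so it is invertible. The regression \eqref{eq:phase-I-regression} is consistent, because \(\theta_{{\rm a},k}\) built from the true \(P_{\rm a}^k,\Xi_{\rm a}^{k+1},\Gamma_{\rm a}^k\) satisfies it by the Itô identity \eqref{eq:phase-I-integral-identity}. Multiplying both sides by \((\Theta_{{\rm a},A}^{k})^\top\) and inverting gives \eqref{eq:phase-I-ls-solution}. Uniqueness follows from injectivity: if two vectors both solve the regression, their difference lies in the kernel of \(\Theta_{{\rm a},A}^{k}\), which is \(\{0\}\).

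The main obstacle is bookkeeping rather than analysis. One must check that the column ordering and sign conventions in \eqref{eq:Theta-a-A} match the factorization through \(\mathscr D_{{\rm a},k}\). In particular, the \(\operatorname{vecv}\)/\(\operatorname{vech}\) pairing must make \(\Delta X\) and \(I_{xx}\) act on the same coordinate \(\operatorname{vech}(P_{\rm a}^k)\), so that \(\Theta^k_{{\rm a},xx}=[\Delta X,\ I_{xx}]\begin{bmatrix}I\\-2\delta_kI\end{bmatrix}\) holds exactly.
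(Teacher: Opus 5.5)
Your proposal is correct and follows the paper's proof essentially verbatim. Both use the same factorization $\Theta_{{\rm a},A}^{k}=\mathscr D_{{\rm a},k}\mathcal T_{{\rm a},k}$ with the identical block matrix, read off $z=0$ from the first, third and fourth block rows, and get uniqueness from consistency of the exact It\^{o} regression identity. Your explicit normal-equation and $\operatorname{vecv}$/$\operatorname{vech}$ bookkeeping only spells out what the paper leaves implicit.
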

\begin{proof}
    See Appendix~\ref{full rank}.
\end{proof}
For finite data, the expectation terms in
\eqref{eq:phase-I-regression} are replaced by sample averages. The
resulting estimate is denoted by \(\widehat\theta_{{\rm a},k}\).
The minimum singular value
\(\sigma_{\min}(\widehat\Theta_{{\rm a},A}^{k})\) and the residual norm
\[
\left\|
\widehat\Theta_{{\rm a},A}^{k}\widehat\theta_{{\rm a},k}
-
\widehat b_{{\rm a},k}
\right\|_2
\]
are used as numerical diagnostics.
\subsection{Data-driven selection of the iteration length in Phase~I}
\label{subsec:phase-I-stepsize}

The Phase~I update changes the cumulative factor from \(c_k\) to
\(c_{k+1}=c_k+\alpha_{k+1}\). The following condition uses only the
Phase~I quantities obtained from the regression.

\begin{lemma}[Phase~I model-free step-length guarantee]
\label{lem:phase-I-step}
Suppose that
\begin{equation}
s\!\big(\mathcal L^{\rm a}_{K_{\rm a}^{k+1},c_k}\big)<0.
\label{eq:pre-shift-stability}
\end{equation}
Define
\[
Q_{{\rm a},k+1}
:=
Q_{\rm a}
+
(K_{\rm a}^{k+1})^\top R_{\rm a}K_{\rm a}^{k+1}.
\]
Let \(\overline P_{\rm a}^{k+1}>0\) be the unique solution of
\begin{equation}
\begin{aligned}
0=&
A_{\rm prev}^\top\overline P_{\rm a}^{k+1}
+
\overline P_{\rm a}^{k+1}A_{\rm prev}\\
&+
(C-DK_{\rm a}^{k+1})^\top
\overline P_{\rm a}^{k+1}
(C-DK_{\rm a}^{k+1})
+
Q_{{\rm a},k+1},
\end{aligned}
\label{eq:auxiliary-comparison-lyapunov}
\end{equation}
where $A_{\rm prev}
=
A-\frac{\bar r}{2}I_n+c_kI_n-BK_{\rm a}^{k+1}.$ 
Then
\begin{equation}
0<\overline P_{\rm a}^{k+1}\leq P_{\rm a}^{k}.
\label{eq:auxiliary-comparison-order}
\end{equation}
If
\begin{equation}
0<\alpha_{k+1}
<
\frac{\lambda_{\min}(Q_{{\rm a},k+1})}
{2\lambda_{\max}(P_{\rm a}^{k})},
\label{eq:model-free-alpha}
\end{equation}
then
\[
s\!\big(\mathcal L^{\rm a}_{K_{\rm a}^{k+1},c_{k+1}}\big)<0,
\quad
c_{k+1}=c_k+\alpha_{k+1}.
\]
\end{lemma}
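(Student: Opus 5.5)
The plan is to run the standard policy-improvement comparison argument for the auxiliary operator at the fixed factor $c_k$, and then use the exact spectral shift of Proposition~\ref{prop:operator-translation} for the fixed gain $K_{\rm a}^{k+1}$ to absorb $\alpha_{k+1}$.

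\textbf{Setup.} For $K\in\mathbb R^{m\times n}$ and $P\in\mathbb S^n$, define
\[
\Phi_{\rm a}(P,K):=(\mathcal L^{\rm a}_{K,c_k})^*(P)+Q_{\rm a}+K^\top R_{\rm a}K,
\]
where $(\mathcal L^{\rm a}_{K,c})^*(P)=A_{K,c}^\top P+PA_{K,c}+C_K^\top PC_K$ with $A_{K,c}:=A-\tfrac{\bar r}{2}I_n+cI_n-BK$. This is exactly $\Phi$ from \eqref{P1.2} for the system $(A-\tfrac{\bar r}{2}I_n+c_kI_n,B;C,D)$ with weights $(Q_{\rm a},R_{\rm a})$. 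Hence Lemma~\ref{lem:policy-identity} applies verbatim, with $G_{\rm a}(P)=R_{\rm a}+D^\top PD$ and minimizer $K_{\rm a}^{k+1}$ from \eqref{eq:auxiliary-PI}.

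By \eqref{eq:auxiliary-stability-all-k} of Lemma~\ref{lem:auxiliary-spectrum-assignment}, $s(\mathcal L^{\rm a}_{K_{\rm a}^k,c_k})<0$. Since $Q_{{\rm a},k}>0$, Lemma~\ref{lem:positive-lyapunov-inverse} gives $P_{\rm a}^k\ge0$, so $G_{\rm a}(P_{\rm a}^k)\ge R_{\rm a}>0$ and the identity is legitimate. It yields
\[
\Phi_{\rm a}(P_{\rm a}^k,K_{\rm a}^{k+1})=-(K_{\rm a}^k-K_{\rm a}^{k+1})^\top G_{\rm a}(P_{\rm a}^k)(K_{\rm a}^k-K_{\rm a}^{k+1})\le0,
\]
because $\Phi_{\rm a}(P_{\rm a}^k,K_{\rm a}^k)=0$ by \eqref{eq:auxiliary-PE}.

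\textbf{Order \eqref{eq:auxiliary-comparison-order}.} Subtract \eqref{eq:auxiliary-comparison-lyapunov}, which reads $\Phi_{\rm a}(\overline P_{\rm a}^{k+1},K_{\rm a}^{k+1})=0$, from the display above. This gives
\[
(\mathcal L^{\rm a}_{K_{\rm a}^{k+1},c_k})^*(P_{\rm a}^k-\overline P_{\rm a}^{k+1})+Y=0,\qquad Y:=(K_{\rm a}^k-K_{\rm a}^{k+1})^\top G_{\rm a}(P_{\rm a}^k)(K_{\rm a}^k-K_{\rm a}^{k+1})\ge0.
\]
Lemma~\ref{lem:positive-lyapunov-inverse} concerns $\mathcal L_K$. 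We apply it to the shifted system $(A-\tfrac{\bar r}{2}I_n+c_kI_n,B;C,D)$, whose closed-loop operator is stable by hypothesis \eqref{eq:pre-shift-stability}. This gives $P_{\rm a}^k-\overline P_{\rm a}^{k+1}\ge0$. The same lemma with right-hand side $Q_{{\rm a},k+1}>0$ gives existence, uniqueness and $\overline P_{\rm a}^{k+1}\ge0$.

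For strict positivity, suppose $\overline P_{\rm a}^{k+1}v=0$ for some $v\neq0$. Testing \eqref{eq:auxiliary-comparison-lyapunov} with $v$ gives
\[
0=v^\top (C-DK_{\rm a}^{k+1})^\top\overline P_{\rm a}^{k+1}(C-DK_{\rm a}^{k+1})v+v^\top Q_{{\rm a},k+1}v>0,
\]
a contradiction.

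\textbf{Step length.} By Proposition~\ref{prop:operator-translation}, $\mathcal L^{\rm a}_{K,c_{k+1}}=\mathcal L^{\rm a}_{K,c_k}+2\alpha_{k+1}\mathcal I_{\mathbb S^n}$, and the same holds for the adjoints. Therefore
\[
(\mathcal L^{\rm a}_{K_{\rm a}^{k+1},c_{k+1}})^*(\overline P_{\rm a}^{k+1})
=-Q_{{\rm a},k+1}+2\alpha_{k+1}\overline P_{\rm a}^{k+1}
\le-\bigl(\lambda_{\min}(Q_{{\rm a},k+1})-2\alpha_{k+1}\lambda_{\max}(P_{\rm a}^k)\bigr)I_n<0.
\]
The first inequality uses $\overline P_{\rm a}^{k+1}\le P_{\rm a}^k$, so that $\lambda_{\max}(\overline P_{\rm a}^{k+1})\le\lambda_{\max}(P_{\rm a}^k)$. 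The final strict inequality is exactly \eqref{eq:model-free-alpha}.

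Since $\overline P_{\rm a}^{k+1}>0$, the stochastic Lyapunov criterion (Lemma~2.1 in \cite{li2022stochastic}, as used in Proposition~\ref{prop:sufficient-shift}) gives mean-square stability of $(A-\tfrac{\bar r}{2}I_n+c_{k+1}I_n-BK_{\rm a}^{k+1},\,C-DK_{\rm a}^{k+1})$. That is, $s(\mathcal L^{\rm a}_{K_{\rm a}^{k+1},c_{k+1}})<0$.

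\textbf{Main obstacle.} The delicate point is bookkeeping rather than estimation. Lemmas~\ref{lem:policy-identity} and \ref{lem:positive-lyapunov-inverse} are stated for the original $(A,B;C,D,Q,R)$, so we must check they transfer to the shifted drift $A-\tfrac{\bar r}{2}I_n+c_kI_n$; their proofs are purely algebraic and do not depend on the drift matrix, so they should. We must also secure $P_{\rm a}^k\ge0$, hence $G_{\rm a}(P_{\rm a}^k)>0$, from the stability at $(K_{\rm a}^k,c_k)$ before invoking the policy identity. Everything else is direct once the order $\overline P_{\rm a}^{k+1}\le P_{\rm a}^k$ is established.
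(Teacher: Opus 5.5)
Your proposal is correct and follows the paper's proof essentially step for step. The policy identity on the shifted system gives $(\mathcal L^{\rm a}_{K_{\rm a}^{k+1},c_k})^*(P_{\rm a}^k-\overline P_{\rm a}^{k+1})+Y=0$ with $Y\ge0$, the positive inverse of the stable shifted operator gives the order, and the exact shift by $2\alpha_{k+1}$ together with the stochastic Lyapunov criterion finishes; your kernel argument for $\overline P_{\rm a}^{k+1}>0$ replaces the paper's citation of \cite{li2022stochastic}. One bookkeeping fix: the stability $s(\mathcal L^{\rm a}_{K_{\rm a}^k,c_k})<0$ you need for $P_{\rm a}^k\ge0$ (which the paper also uses implicitly to assert $Z_{{\rm a},k}\ge0$) should come from the standing inductive hypothesis of the model-free iteration, with base case from Proposition~\ref{prop:sufficient-shift}. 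It should not come from \eqref{eq:auxiliary-stability-all-k}, which Lemma~\ref{lem:auxiliary-spectrum-assignment} proves only under the model-based step rule \eqref{eq:model-based-alpha}.
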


\begin{proof}
See Appendix~\ref{pf-phase-I-step}.
\end{proof}

Lemma~\ref{lem:phase-I-step} applies to Phase~I. The positive definiteness of
\(Q_{{\rm a},k+1}\) follows from \(Q_{\rm a}>0\) and \(R_{\rm a}>0\).
Phase~II uses the original indefinite weights $Q,R$ and follows
Theorem~\ref{thm:indefinite-PI}.

\subsection{Data-driven indefinite PI in Phase~II}
\label{subsec:phase-II-regression}

Phase~II starts from the stabilizing gain \(K^\dagger\) returned by
Phase~I. The auxiliary quantities \(\bar r\), \(c_k\), \(\alpha_k\),
\(Q_{\rm a}\), and \(R_{\rm a}\) are removed. Set
\[
K^0:=K^\dagger,
\quad
i:=0.
\]
For each \(i\ge0\), define
\begin{align}
A^{i}
&:=A-BK^{i},\\
C^{i}
&:=C-DK^{i},
\label{eq:phase-II-closed-loop}\\
v^{i}(t)
&:=u(t)+K^{i}x(t),
\label{eq:phase-II-virtual-input}\\
Q_{i}
&:=Q+(K^{i})^\top RK^{i},
\label{eq:Qo-i}\\
\Gamma^{i}
&:=D^\top P^{i}D,
\label{eq:Gamma-o-i}\\
\widetilde K^{i+1}
&:=B^\top P^{i}+D^\top P^{i}C
=
\big(R+\Gamma^{i}\big)K^{i+1},
\label{eq:Ktilde-o-i}\\
\Xi^{i+1}
&:=\widetilde K^{i+1}-\Gamma^{i}K^{i}.
\label{eq:Xi-o-i}
\end{align}
The policy update is
\begin{equation}
K^{i+1}
=
\big(R+\Gamma^{i}\big)^{-1}
\big(\Xi^{i+1}+\Gamma^{i}K^{i}\big).
\label{eq:Ko-update-data}
\end{equation}

The original policy-evaluation equation is
\begin{equation}
(A^{i})^\top P^{i}
+
P^{i}A^{i}
+
(C^{i})^\top P^{i}C^{i}
+
Q_{i}
=
0 .
\label{eq:phase-II-PE}
\end{equation}
It\^{o}'s formula gives
\begin{align}
&\mathbb E\!\left[
x(t_{\ell+1})^\top P^{i}x(t_{\ell+1})
-
x(t_\ell)^\top P^{i}x(t_\ell)
\right]
\nonumber\\
&=
\mathbb E\!\int_{t_\ell}^{t_{\ell+1}}
\left[
-x^\top Q_{i}x
+2(v^{i})^\top\Xi^{i+1}x
+(v^{i})^\top\Gamma^{i}v^{i}
\right]\mathrm d\tau .
\label{eq:phase-II-integral-identity}
\end{align}
Define \(\mathbb I_{xv}^{i}\) and \(\mathbb I_{vv}^{i}\) by
\eqref{eq:Iaxv}--\eqref{eq:Iavv} with \(v_{\rm a}^k\) replaced by
\(v^{i}\), and set
\[
\Theta_{xx}:=\left[\Delta\mathbb X_0,\ldots,\Delta\mathbb X_{s-1}\right]^\top.
\]
The original-weight regression equation is 
\begin{equation}
\Theta_{A}^{i}\theta_{i}
=
b_{i},
\label{eq:phase-II-regression}
\end{equation}
where
\begin{align}
\Theta_{A}^{i}
&:=
\left[
\Theta_{xx},
\;
-2\mathbb I_{xv}^{i},
\;
-\mathbb I_{vv}^{i}
\right],
\label{eq:Theta-o-A}\\
\theta_{i}
&:=
\operatorname{col}\!\big(
\operatorname{vech}(P^{i}),
\operatorname{vec}(\Xi^{i+1}),
\operatorname{vech}(\Gamma^{i})
\big),
\label{eq:theta-o-i}\\
b_{i}
&:=
-\mathbb I_{xx}\operatorname{vech}(Q_{i}).
\label{eq:b-o-i}
\end{align}

\begin{asm}
\label{ass:phase-II-data-richness}
In each Phase~II iteration, if the expectation terms in the regression equation are evaluated exactly, then the matrix 
\[
\mathscr D_{i}
:=
\begin{bmatrix}
\Delta X &
I_{xv}^{i} &
I_{vv}^{i}
\end{bmatrix}
\]
has full column rank:
\begin{equation}
\operatorname{rank}(\mathscr D_{i})=d.
\label{eq:phase-II-data-richness}
\end{equation}
\end{asm}

\begin{proposition}
\label{prop:phase-II-identifiability}
Under Assumption~\ref{ass:phase-II-data-richness}, the matrix
\(\Theta_{A}^{i}\) in~\eqref{eq:phase-II-regression}
has full column rank \(d\). Consequently, the Phase~II regression
equation has the unique solution
\begin{equation}
\theta_{i}
=
\left[
(\Theta_{A}^{i})^\top
\Theta_{A}^{i}
\right]^{-1}
(\Theta_{A}^{i})^\top b_{i}.
\label{eq:phase-II-ls-solution}
\end{equation}
\end{proposition}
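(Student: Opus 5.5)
Since $\mathscr D_i$ has exactly $d$ columns while $\Theta_A^i$ has $d_x+mn+d_u=d$ columns, I will show that $\Theta_A^i$ is obtained from $\mathscr D_i$ by right-multiplication with an invertible block-diagonal matrix. The full-rank claim then follows at once. The formula \eqref{eq:phase-II-ls-solution} is the usual consequence: $(\Theta_A^i)^\top\Theta_A^i$ is invertible, and the true parameter $\theta_i$ satisfies \eqref{eq:phase-II-regression} by construction from the It\^o identity \eqref{eq:phase-II-integral-identity}.

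First I identify the three blocks. By \eqref{eq:delta-X}, the block $\Theta_{xx}$ has rows $\Delta\mathbb X_\ell^\top$, so it coincides with $\Delta X$. I read unhatted $\Delta X, I_{xv}^i, I_{vv}^i$ in Assumption~\ref{ass:phase-II-data-richness} as the expectation-based matrices $\Delta\mathbb X,\mathbb I^i_{xv},\mathbb I^i_{vv}$. The second block is $-2\mathbb I^i_{xv}$ and the third is $-\mathbb I^i_{vv}$. Therefore
\[
\Theta_A^i=\mathscr D_i\,\mathrm{diag}\bigl(I_{d_x},\,-2I_{mn},\,-I_{d_u}\bigr),
\]
and the diagonal factor is nonsingular. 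Hence $\operatorname{rank}(\Theta_A^i)=\operatorname{rank}(\mathscr D_i)=d$.

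Next comes uniqueness and the explicit formula. If $\Theta_A^i\eta=0$, then $\eta=0$ by the column rank, so the solution of \eqref{eq:phase-II-regression} is unique. Because the true $\theta_i$, built from $P^i,\Xi^{i+1},\Gamma^i$, solves it, I multiply \eqref{eq:phase-II-regression} by $(\Theta_A^i)^\top$ and invert the positive definite Gram matrix to obtain \eqref{eq:phase-II-ls-solution}.

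The only real obstacle is bookkeeping. I must confirm that the vectorization conventions $\operatorname{vecv}$, $\operatorname{vech}$, $x\otimes v$ make the rows of the regression match \eqref{eq:phase-II-integral-identity} exactly, so that $\theta_i$ is indeed a solution. I also need the column counts $d_x$, $mn$, $d_u$ to agree between $\mathscr D_i$ and $\Theta_A^i$. Both checks are immediate from the notation identities stated in the paper.
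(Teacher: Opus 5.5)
Your proposal is correct and is essentially the paper's proof. The paper also writes $\Theta_A^i=\mathscr D_i\mathcal T_{\rm o}$ with $\mathcal T_{\rm o}=\operatorname{diag}(I_{d_x},-2I_{mn},-I_{d_u})$ nonsingular, concludes $\operatorname{rank}(\Theta_A^i)=d$, and then uses consistency of the exact regression with the true parameter vector to obtain \eqref{eq:phase-II-ls-solution}. Your extra checks (identifying $\Theta_{xx}$ with $\Delta X$, and verifying the rows through the $\operatorname{vecv}$, $\operatorname{vech}$ and $x\otimes v$ identities) only spell out what the paper leaves implicit.
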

\begin{proof}
    See Appendix~\ref{full rank}.
\end{proof}
For finite data, the corresponding estimate is denoted by
\(\widehat\theta_{i}\). The quantities
\(\sigma_{\min}(\widehat\Theta_{A}^{i})\) and
\[
\left\|
\widehat\Theta_{A}^{i}\widehat\theta_{i}
-
\widehat b_{i}
\right\|_2
\]
are monitored during the numerical implementation.

\subsection{Main results and algorithm}
\label{subsec:model-free-main}

The same exploratory data set supports the Phase~I and Phase~II regressions. Algorithm~\ref{alg:model-free} implements the finite-data procedure.

\begin{thm}[Conditional Phase~I certificate]
\label{thm:model-free-certificate}
Assume that the expectation terms in the Phase~I regression equations are evaluated exactly. Suppose that Assumption~\ref{ass:phase-I-data-richness} holds, that \(P_{\rm a}^{0}(\bar r)>0\), and that $\alpha_{k+1}$ satisfy~\eqref{eq:model-free-alpha}. If the Phase~I iteration reaches an index \(k_\star\) such that
\begin{equation}
\bar r-2c_{k_\star}\le-\varepsilon,
\label{eq:model-free-stop}
\end{equation}
then
\[
K^\dagger:=K_{\rm a}^{k_\star}
\]
satisfies $s(\mathcal L_{K^\dagger})\le-\varepsilon.$ 
Thus \(K^\dagger\in\mathcal K_{\rm ms}\) is an initial stabilizer for the original indefinite PI.
\end{thm}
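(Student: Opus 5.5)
The plan is an induction on the Phase~I index \(k\) that propagates auxiliary stability, \(s(\mathcal L^{\rm a}_{K_{\rm a}^k,c_k})<0\) for every \(k\le k_\star\). At the stopping index the conclusion then follows from the translation identity of Proposition~\ref{prop:operator-translation}. The starting point is that, under exact expectations and Assumption~\ref{ass:phase-I-data-richness}, Proposition~\ref{prop:phase-I-identifiability} gives a unique solution of \eqref{eq:phase-I-regression}. Because the true triple \(\operatorname{col}(\operatorname{vech}(P_{\rm a}^k),\operatorname{vec}(\Xi_{\rm a}^{k+1}),\operatorname{vech}(\Gamma_{\rm a}^k))\) satisfies \eqref{eq:phase-I-integral-identity}, the data-driven \(P_{\rm a}^k\) and the update \eqref{eq:Ka-update-data} coincide exactly with the model-based quantities \eqref{eq:auxiliary-PE}--\eqref{eq:auxiliary-PI}. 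So from here on we may reason with the model-based iteration.

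\emph{Base case.} The hypothesis \(P_{\rm a}^0(\bar r)>0\) together with \eqref{eq:initial-auxiliary-lyapunov} and \(Q_{\rm a}>0\) gives \(s(\mathcal L^{\rm a}_{0,c_0})<0\) by the stochastic Lyapunov criterion, exactly as in the proof of Proposition~\ref{prop:sufficient-shift}.

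\emph{Inductive step.} Suppose \(s(\mathcal L^{\rm a}_{K_{\rm a}^k,c_k})<0\). Then \(P_{\rm a}^k\ge0\) by Lemma~\ref{lem:positive-lyapunov-inverse}, and in fact \(P_{\rm a}^k>0\) since \(Q_{{\rm a},k}>0\). Hence \(R_{\rm a}+D^\top P_{\rm a}^kD>0\) and \(K_{\rm a}^{k+1}\) is well defined. We first need stability of the updated gain at the old factor, \(s(\mathcal L^{\rm a}_{K_{\rm a}^{k+1},c_k})<0\). This is the standard PI step, applied to the shifted pair \((A-\tfrac{\bar r}{2}I_n+c_kI_n,C)\) with positive definite weights. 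By Lemma~\ref{lem:policy-identity}, with \(R,Q\) replaced by \(R_{\rm a},Q_{\rm a}\) and \(A\) shifted, we have
\[
\Phi_{\rm a}(P_{\rm a}^k,K_{\rm a}^{k+1})=-(K_{\rm a}^k-K_{\rm a}^{k+1})^\top(R_{\rm a}+D^\top P_{\rm a}^kD)(K_{\rm a}^k-K_{\rm a}^{k+1})\le0 .
\]
Therefore \(\mathcal L^{{\rm a}*}_{K_{\rm a}^{k+1},c_k}(P_{\rm a}^k)\le -Q_{{\rm a},k+1}<0\) with \(P_{\rm a}^k>0\), and the Lyapunov criterion yields the claim, which is \eqref{eq:pre-shift-stability}. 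Lemma~\ref{lem:phase-I-step} then applies. Under the step-length condition \eqref{eq:model-free-alpha} it gives \(s(\mathcal L^{\rm a}_{K_{\rm a}^{k+1},c_{k+1}})<0\), which closes the induction.

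\emph{Conclusion.} At \(k_\star\), Proposition~\ref{prop:operator-translation} gives
\[
s(\mathcal L_{K_{\rm a}^{k_\star}})=s(\mathcal L^{\rm a}_{K_{\rm a}^{k_\star},c_{k_\star}})+\bar r-2c_{k_\star}<\bar r-2c_{k_\star}\le-\varepsilon .
\]
Hence \(K^\dagger\in\mathscr K_{\rm ms}\) and \(s(\mathcal L_{K^\dagger})\le-\varepsilon\).

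The main obstacle is the identification step. One must justify that the regression solution equals the true model-based triple, and in particular that \eqref{eq:phase-I-integral-identity} holds along data generated by an arbitrary exploratory input, with the drift correction \(\delta_k x\) absorbed into \(\Theta^k_{{\rm a},xx}\). This requires checking that the Itô expansion of \(x^\top P_{\rm a}^kx\) under \eqref{eq:phase-I-split} produces exactly the cross and quadratic terms \(2(v_{\rm a}^k)^\top\Xi_{\rm a}^{k+1}x\) and \((v_{\rm a}^k)^\top\Gamma_{\rm a}^kv_{\rm a}^k\). It also requires finite second moments on each \([t_\ell,t_{\ell+1}]\), so that the expectations are well defined. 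Once uniqueness from full column rank is in hand, the rest is the deterministic induction above.
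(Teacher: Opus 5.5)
Your proposal is correct and follows the paper's proof. Both identify the data-driven iterates with the model-based ones via the full-rank regression, obtain auxiliary stability at $k=0$ from $P_{\rm a}^0(\bar r)>0$, propagate $s(\mathcal L^{\rm a}_{K_{\rm a}^k,c_k})<0$ through Lemma~\ref{lem:phase-I-step}, and conclude at $k_\star$ with the translation identity of Proposition~\ref{prop:operator-translation}. You are more careful than the paper in deriving the hypothesis \eqref{eq:pre-shift-stability} of Lemma~\ref{lem:phase-I-step} explicitly from the policy identity and the Lyapunov criterion, which the paper simply attributes to that lemma; one small correction is that the identification step belongs inside the induction, since the existence of the true $P_{\rm a}^k$ relies on the inductive hypothesis.
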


\begin{proof}
Proposition~\ref{prop:sufficient-shift} gives $s\!\left(\mathcal L^{\rm a}_{K_{\rm a}^{0},c_0}\right)<0.$

Assumption~\ref{ass:phase-I-data-richness} and exact evaluation of the expectation terms make the Phase~I regression equation equivalent to the equations \eqref{eq:phase-I-PE} and \eqref{eq:Ka-update-data}. By Lemma~\ref{lem:phase-I-step}, for each fixed \(c_k\), the auxiliary positive definite policy update preserves mean-square stability. Hence
\[
s\!\big(\mathcal L^{\rm a}_{K_{\rm a}^{k},c_k}\big)<0,
\quad k=0,1,\ldots.
\]
Proposition~\ref{prop:operator-translation} gives
\[
s\big(\mathcal L_{K_{\rm a}^{k_\star}}\big)
=
s\!\Big(\mathcal L^{\rm a}_{K_{\rm a}^{k_\star},c_{k_\star}}\Big)
+\bar r-2c_{k_\star}
<
\bar r-2c_{k_\star}
\le-\varepsilon.
\]
\end{proof}

\begin{cor}
\label{cor:phase-II-equivalence}
Assume that Theorem~\ref{thm:model-free-certificate} holds. Assume that the expectation terms in the Phase~II regression equations are evaluated exactly and that Assumption~\ref{ass:phase-II-data-richness} holds. Under Assumptions~\ref{ass:ms-stabilizable}--\ref{ass:riccati-feasibility}, the Phase~II regression equation generates the same policy sequence as Theorem~\ref{thm:indefinite-PI}. In particular,
\[
K^{i}\in\mathcal K_{\rm ms},
\
P^\ast\leq P^{i+1}\leq P^{i},
\
P^{i}\to P^\ast,
\
K^{i}\to K^\ast.
\]
\end{cor}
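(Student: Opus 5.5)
The plan is to show by induction on $i$ that, under exact expectations and Assumption~\ref{ass:phase-II-data-richness}, each Phase~II regression step returns the same $(P^i,K^{i+1})$ as \eqref{eq:indefinite-PE}--\eqref{eq:indefinite-PI}. The stated properties then follow directly from Theorem~\ref{thm:indefinite-PI}.

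\textbf{Base case.} By Theorem~\ref{thm:model-free-certificate}, $K^0=K^\dagger\in\mathscr K_{\rm ms}$, so Theorem~\ref{thm:indefinite-PI} is applicable from this initial gain.

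\textbf{Inductive step.} Suppose the regression has reproduced the model-based iterates up to $K^i$. Theorem~\ref{thm:indefinite-PI}(2) gives $K^i\in\mathscr K_{\rm ms}$. Since $\mathcal L^*_{K^i}$ is then invertible, \eqref{eq:phase-II-PE} has a unique symmetric solution $P^i$; this is the exact solution of \eqref{eq:indefinite-PE}.

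\emph{Consistency of the regression.} Apply It\^o's formula to $x^\top P^i x$ along \eqref{1-2}, writing $u=-K^ix+v^i$. The resulting drift is
\[
x^\top\big((A^i)^\top P^i+P^iA^i+(C^i)^\top P^iC^i\big)x
+2(v^i)^\top\big(B^\top P^i+D^\top P^iC-D^\top P^iDK^i\big)x
+(v^i)^\top D^\top P^iD\,v^i .
\]
Substituting \eqref{eq:phase-II-PE} and the definitions \eqref{eq:Gamma-o-i}--\eqref{eq:Xi-o-i} turns this into \eqref{eq:phase-II-integral-identity}. The stochastic integral term has zero expectation because the trajectories are square integrable on finite intervals. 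Vectorizing with the vecv/vech and Kronecker identities from the Notation section then shows that the true vector $\theta_i$ of \eqref{eq:theta-o-i} satisfies $\Theta_A^i\theta_i=b_i$.

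\emph{Uniqueness.} Proposition~\ref{prop:phase-II-identifiability} gives full column rank of $\Theta_A^i$. Hence the least-squares formula \eqref{eq:phase-II-ls-solution} returns exactly this $\theta_i$. In particular it recovers $P^i$, $\Xi^{i+1}$ and $\Gamma^i=D^\top P^iD$.

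\emph{Policy update.} Theorem~\ref{thm:indefinite-PI}(1) gives $G(P^i)=R+\Gamma^i\ge G(P^*)>0$, so the matrix $R+\Gamma^i$ is invertible. Therefore \eqref{eq:Ko-update-data} yields $(R+\Gamma^i)^{-1}\widetilde K^{i+1}=G(P^i)^{-1}L(P^i)$, which is $K^{i+1}$ of \eqref{eq:indefinite-PI}. This closes the induction.

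\textbf{Conclusion.} The data-driven sequence therefore coincides with the model-based one. Items (2)--(4) of Theorem~\ref{thm:indefinite-PI} give $K^i\in\mathscr K_{\rm ms}$, $P^*\le P^{i+1}\le P^i$, $P^i\to P^*$ and $K^i\to K^*$; Assumptions~\ref{ass:ms-stabilizable}--\ref{ass:riccati-feasibility} guarantee that $P^*$ and $K^*$ exist, by Lemma~\ref{lem:stabilizing-sare}.

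\textbf{Main obstacle.} The delicate step is the consistency of the regression. I need to check that the cross term coefficient exactly equals $\Xi^{i+1}$, including the $-\Gamma^iK^i$ correction that comes from the diffusion term. I also need the identifiability argument to rule out spurious solutions. For that I would lean directly on Proposition~\ref{prop:phase-II-identifiability}, noting that $\Theta_{xx}$ together with $\mathbb I^i_{xv}$ and $\mathbb I^i_{vv}$ is precisely the column set controlled by Assumption~\ref{ass:phase-II-data-richness}.
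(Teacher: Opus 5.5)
Your proposal is correct and follows the same route as the paper. Theorem~\ref{thm:model-free-certificate} gives $K^0=K^\dagger\in\mathscr K_{\rm ms}$; It\^o's formula and the full column rank in Proposition~\ref{prop:phase-II-identifiability} make the exact-expectation regression equivalent to \eqref{eq:indefinite-PE}--\eqref{eq:indefinite-PI}; Theorem~\ref{thm:indefinite-PI} then finishes the argument. You make explicit what the paper leaves implicit: the induction, the cross-term identity $B^\top P^i+D^\top P^iC^i=\Xi^{i+1}$, and the invertibility of $R+\Gamma^i=G(P^i)$.

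The only loose spot is your reason for the stochastic integral having zero mean. Its integrand is quadratic in $(x,v^i)$, so square integrability alone is not enough. You need either finite fourth moments (as with the paper's bounded exploratory inputs) or a localization argument using $\mathbb E\sup_{t\le T}\|x(t)\|^2<\infty$; the paper also takes this for granted.
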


\begin{proof}
Theorem~\ref{thm:model-free-certificate} gives
\(K^{0}=K^\dagger\in\mathcal K_{\rm ms}\).
Assumption~\ref{ass:phase-II-data-richness} and evaluation of the expectation terms make the Phase~II regression equation equivalent to the policy-evaluation and policy-improvement equations in Theorem~\ref{thm:indefinite-PI}. The conclusion follows from Theorem~\ref{thm:indefinite-PI}.
\end{proof}

\begin{algorithm}[!t]
\caption{Two-phase model-free PI for indefinite SLQ control}
\label{alg:model-free}
\begin{algorithmic}[1]
\State \textbf{Input:} Exploratory data; \(Q_{\rm a}>0\), \(R_{\rm a}>0\); original weights \(Q=Q^\top\), \(R=R^\top\); an increasing candidate sequence \(\{\bar r_m\}_{m\ge0}\); \(\alpha_0>0\); \(\varepsilon\ge0\); numerical margins \(\tau_P,\tau_\Theta,\tau_{\rm res},\tau_G>0\).
\State \textbf{Output:} A finite-data candidate stabilizer \(\widehat K^\dagger\) and a finite-data Phase-II policy sequence.
\State Set \(m=0\), \(\widehat K_{\rm a}^0=0\), and \(c_0=\alpha_0\).
\Statex \textbf{Phase I-A: spectral-shift parameter $\bar r$ search}
\Repeat
    \State Set \(\bar r\gets\bar r_m\).
    \State Solve the \(k=0\) Phase~I regression and obtain \(\widehat P_{\rm a}^0(\bar r)\), \(\widehat\Xi_{\rm a}^{1}(\bar r)\), and \(\widehat\Gamma_{\rm a}^{0}(\bar r)\).
    \State Set \(m\gets m+1\).
\Until{\(\lambda_{\min}(\widehat P_{\rm a}^0)\ge\tau_P\), \(\sigma_{\min}(\widehat\Theta_{{\rm a},A}^{0})\ge\tau_\Theta\), and \(\|\widehat\Theta_{{\rm a},A}^{0}\widehat\theta_{{\rm a},0}-\widehat b_{{\rm a},0}\|_2\le\tau_{\rm res}\)}
\Statex \textbf{Phase I-B: auxiliary PI}
\State Set \(k=0\).
\Repeat
    \State Solve the Phase-I regression for \(\widehat P_{\rm a}^k\), \(\widehat\Xi_{\rm a}^{k+1}\), and \(\widehat\Gamma_{\rm a}^{k}\).
    \State Set \(\widehat K_{\rm a}^{k+1}\) using \eqref{eq:Ka-update-data}.
    \State Form \(\widehat Q_{{\rm a},k+1}=Q_{\rm a}+(\widehat K_{\rm a}^{k+1})^\top R_{\rm a}\widehat K_{\rm a}^{k+1}\).
    \State Choose \(\alpha_{k+1}\) from \eqref{eq:model-free-alpha}.
    \State Set \(c_{k+1}=c_k+\alpha_{k+1}\) and \(k\gets k+1\).
\Until{\(\bar r-2c_k<-\varepsilon\)}
\State Set \(\widehat K^\dagger=\widehat K_{\rm a}^{k}\).
\Statex \textbf{Phase II: original indefinite PI}
\State Set \(\widehat K^0=\widehat K^\dagger\) and \(i=0\).
\Repeat
    \State Solve the Phase-II regression for \(\widehat P^{i}\), \(\widehat\Xi^{i+1}\), and \(\widehat\Gamma^{i}\).
    \State Verify \(\lambda_{\min}(R+\widehat\Gamma^{i})\ge\tau_G\).
    \State Set \(\widehat K^{i+1}\) using the finite-data counterpart of \eqref{eq:Ko-update-data}.
    \State Set \(i\gets i+1\).
\Until{\(\|\widehat K^{i}-\widehat K^{i-1}\|_F\) and \(\|\widehat P^{i}-\widehat P^{i-1}\|_F\) are below prescribed tolerances}
\State \Return \(\widehat K^{i}\) and \(\widehat P^{i}\).
\end{algorithmic}
\end{algorithm}

\section{Simulation }\label{VI}
    In this section, A three-state stochastic load-frequency-control model is used to evaluate the model-based algorithm and its finite-data model-free counterpart.

\subsection{System setting}
The numerical study is built on a single-area load-frequency-control (LFC)
model. The model describes the governor--turbine--generator dynamics around an
operating equilibrium and is widely used for frequency regulation studies
\cite{vamvoudakis2015asymptotically}. Let
\[
x(t)=
\begin{bmatrix}
\Delta\alpha(t)\\
\Delta P_{\rm m}(t)\\
\Delta f_{\rm G}(t)
\end{bmatrix},
\qquad
u(t)=\Delta P_{\rm c}(t),
\]
where \(\Delta\alpha\) is the incremental governor-valve position,
\(\Delta P_{\rm m}\) is the incremental mechanical-power output of the
turbine, \(\Delta f_{\rm G}\) is the incremental generator-frequency
deviation, and \(\Delta P_{\rm c}\) is the governor speed-reference command.
The deterministic linearized LFC dynamics are
\begin{equation}
\dot x(t)=A_{\rm c}x(t)+B_{\rm c}u(t),
\label{eq:lfc-deterministic}
\end{equation}
with
\begin{equation}
A_{\rm c}=
\begin{bmatrix}
-\dfrac{1}{T_{\rm g}} & 0 & \dfrac{1}{R_{\rm g}T_{\rm g}}\\[1.2ex]
\dfrac{K_{\rm t}}{T_{\rm t}} & -\dfrac{1}{T_{\rm t}} & 0\\[1.2ex]
0 & \dfrac{K_{\rm p}}{T_{\rm p}} & -\dfrac{1}{T_{\rm p}}
\end{bmatrix},
\qquad
B_{\rm c}=
\begin{bmatrix}
\dfrac{1}{T_{\rm g}}\\[1.2ex]
0\\[0.4ex]
0
\end{bmatrix}.
\label{eq:lfc-matrices}
\end{equation}
Here, \(T_{\rm g}\), \(T_{\rm t}\), and \(T_{\rm p}\) denote the governor,
turbine, and generator--load time constants, respectively. The parameter
\(R_{\rm g}\) is the speed-regulation coefficient. The constants \(K_{\rm t}\)
and \(K_{\rm p}\) are the turbine and generator--load gains.

The stochastic model \eqref{1-2} used in this paper augments
\eqref{eq:lfc-deterministic} with multiplicative disturbances, 
The drift matrices are taken as
\[
A=A_{\rm c},
\qquad
B=B_{\rm c}.
\]
The matrix \(C\) represents proportional random fluctuations in the governor,
turbine, and frequency channels. The vector \(D\) represents command-dependent
uncertainty transmitted to the frequency channel. This construction retains
the physical governor--turbine--generator structure while introducing the
state- and control-dependent multiplicative noise required by the proposed
stochastic control framework.

Following the standard LFC parameterization in
\cite{vamvoudakis2015asymptotically}, the deterministic parameters are selected as
\[
T_{\rm g}=0.08~\mathrm{s},
\qquad
T_{\rm t}=0.10~\mathrm{s},
\qquad
T_{\rm p}=20~\mathrm{s},
\]
\[
R_{\rm g}=2.5,
\qquad
K_{\rm p}=120,
\qquad
K_{\rm t}=1.
\]
Substitution into \eqref{eq:lfc-matrices} gives
\[
A=
\begin{bmatrix}
-12.5 & 0 & 5\\
10 & -10 & 0\\
0 & 6 & -0.05
\end{bmatrix},
\qquad
B=
\begin{bmatrix}
12.5\\
0\\
0
\end{bmatrix}.
\]
The multiplicative-noise matrices are selected as
\[
C=
\begin{bmatrix}
0.10 & 0 & 0\\
0 & 0.06 & 0\\
0 & 0 & 0.02
\end{bmatrix},
\qquad
D=
\begin{bmatrix}
0\\
0\\
0.20
\end{bmatrix}.
\]
The resulting open-loop stochastic Lyapunov operator satisfies
\[
s(\mathcal L_0)=3.4881>0,
\]
so the uncontrolled system \eqref{1-2} is mean-square unstable.

The original state weighting matrix is
\[
Q=\operatorname{diag}(-0.10,\,0.50,\,30.0),\quad R=-0.02.
\]
The large positive weight on \(\Delta f_{\rm G}\) emphasizes frequency
regulation. The negative entry associated with the governor-valve state
creates an indefinite state objective. The auxiliary weights used in Phase~I
are
\[
Q_{\rm a}=0.01I_3,
\quad
R_{\rm a}=0.20.
\]

For the finite-data implementation, one fixed offline data set is collected
and reused in both phases. The data set contains \(N_s=800\) reset experiments.
For each reset experiment, \(10\,000\) Monte-Carlo paths are simulated over
the interval \(h=0.04\) by using \(96\) Euler--Maruyama substeps. The initial
state and the constant exploratory input are sampled independently according
to
\[
x_0\sim\mathcal U([-3,3]^3),
\quad
u_0\sim\mathcal U([-12,12]).
\]
Antithetic Wiener increments are used in the Monte-Carlo sampling. The
Phase-I data-richness matrix has \(16\) columns, and the Phase-II regression
matrix has \(10\) columns. Their numerical ranks, minimum singular values,
and least-squares residuals are monitored throughout the computation. 

\subsection{Model-based spectral-assignment PI}

Algorithm~\ref{alg:model-based} first constructs an initial stabilizer by
using the auxiliary positive definite weights $Q_a, R_a$. The spectral-shift parameter is
selected as
\[
\bar r=s(\mathcal L_0)+10.
\]
Phase~I terminates after \(16\) iterations with
\[
c_{16}=7.2100,
\quad
\bar r-2c_{16}=-0.9319.
\]
The resulting initial stabilizer is
\[
K^\dagger=
\begin{bmatrix}
0.3357 & 0.4553 & 0.8770
\end{bmatrix},
\]
and its closed-loop operator satisfies
\[
s(\mathcal L_{K^\dagger})= -4.2454<0.
\]
Figure~\ref{fig:mb_stage1} displays the designed spectral bound and the
closed-loop spectral abscissa. The auxiliary iteration moves the bound through
the origin and produces a stabilizing gain for the original stochastic system \eqref{sys-k}.

Phase~II applies the indefinite PI. The iteration reaches the stabilizing SARE solution
after \(9\) iterations:
\[
K^\ast=
\begin{bmatrix}
2.2215 & 6.6335 & 19.1482
\end{bmatrix}.
\]
Define Riccati residual $\|\mathcal R(P^j)\|_F = \big\| A^\top P^j+P^jA+C^\top P^jC+Q \allowbreak - L(P^j)^\top G(P^j)^{-1} L(P^j) \big\|_F .$ The final closed-loop spectral abscissa is
\[
s(\mathcal L_{K^\ast})=-21.9130,
\]
and the model-based Riccati residual is
\[
\|\mathcal R(P^\ast)\|_F=2.6605\times10^{-15}.
\]
The admissibility condition remains strict along the iteration, with
\[
\lambda_{\min}(G(P^\ast))=8.4623\times10^{-2}>0.
\]
Figures~\ref{fig:mb_stage2_residual} and
\ref{fig:mb_abs_error} show the decay of the Riccati residual and the
successive policy and value-matrix changes. Figure~\ref{fig:mb_state_traj}
compares the state trajectories with and without feedback. The controller is
activated at \(t=2\,\mathrm{s}\). The controlled trajectories approach the
origin, while the uncontrolled trajectories diverge.

\begin{figure}[!t]
  \centering
  \begin{minipage}[t]{0.48\columnwidth}
    \centering
    \includegraphics[width=\linewidth]{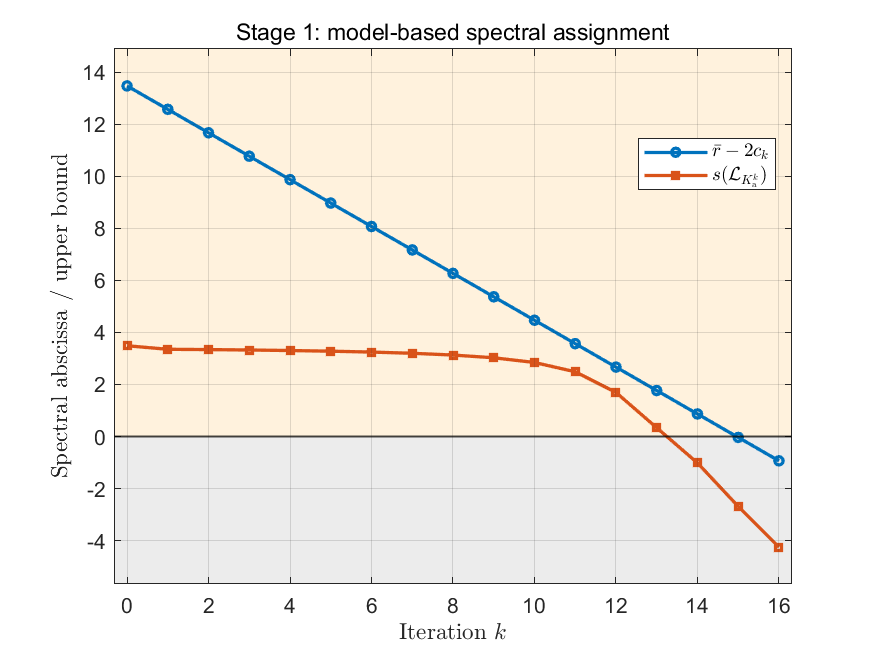}
    \captionof{figure}{Closed-loop spectral abscissa and spectral upper bound in Phase~I of Algorithm~\ref{alg:model-based}.}
    \label{fig:mb_stage1}
  \end{minipage}\hfill
  \begin{minipage}[t]{0.48\columnwidth}
    \centering
    \includegraphics[width=\linewidth]{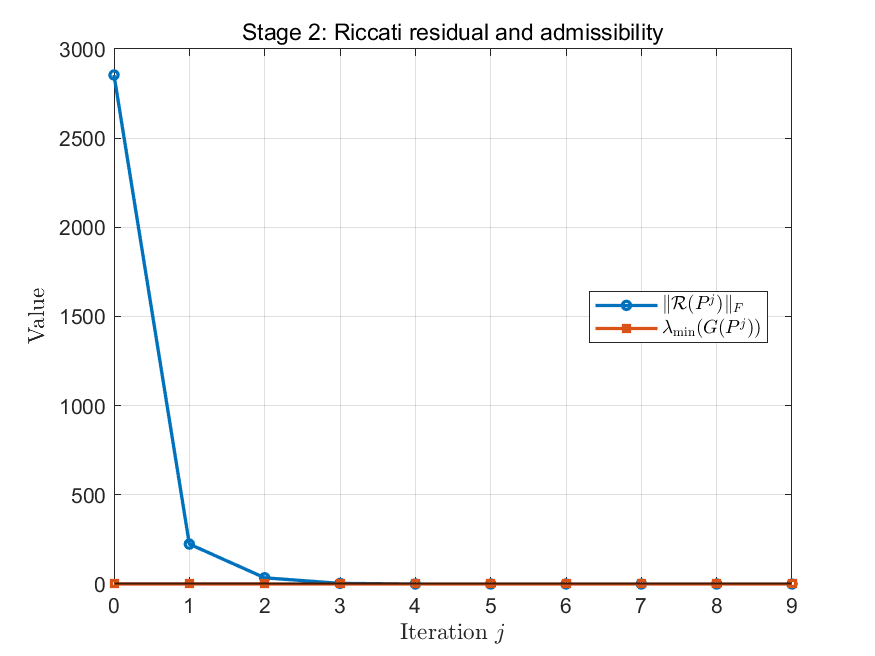}
    \captionof{figure}{Riccati residual and admissibility margin in Phase~II of Algorithm~\ref{alg:model-based}.}
    \label{fig:mb_stage2_residual}
  \end{minipage}
\end{figure}

\begin{figure}[!t]
  \centering
  \begin{minipage}[t]{0.48\columnwidth}
    \centering
    \includegraphics[width=\linewidth]{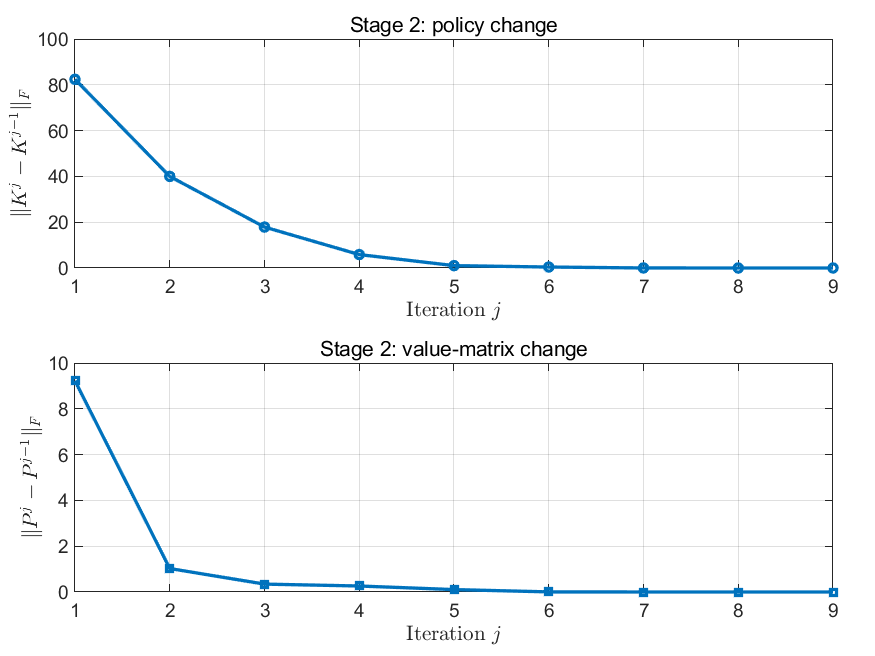}
    \captionof{figure}{Successive policy and value-matrix changes in Phase~II of Algorithm~\ref{alg:model-based}.}
    \label{fig:mb_abs_error}
  \end{minipage}\hfill
  \begin{minipage}[t]{0.48\columnwidth}
    \centering
    \includegraphics[width=\linewidth]{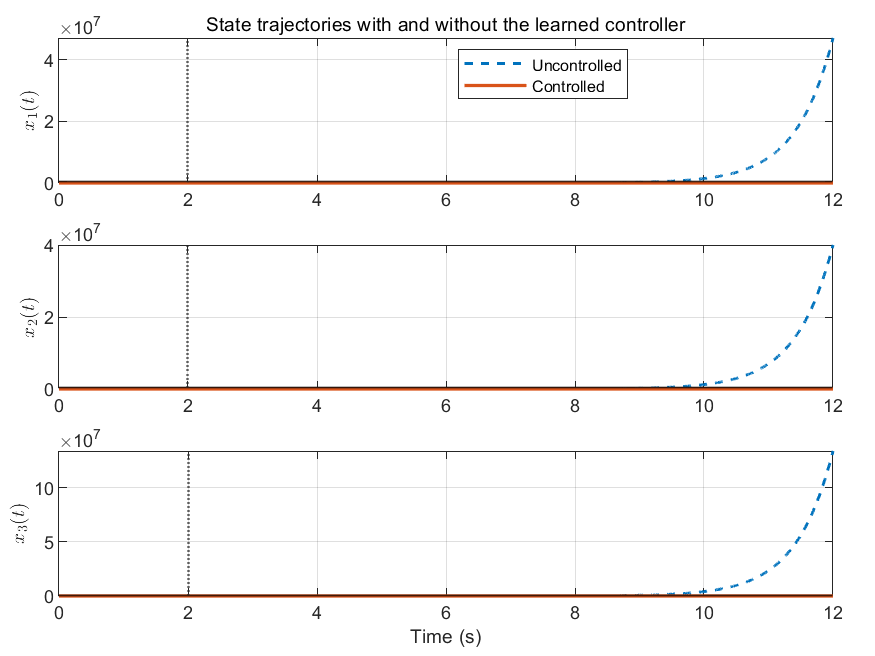}
    \captionof{figure}{State trajectories with and without the controller obtained by Algorithm~\ref{alg:model-based}.}
    \label{fig:mb_state_traj}
  \end{minipage}
\end{figure}

\subsection{Model-free Finite-data PI}

Algorithm~\ref{alg:model-free} uses the fixed offline data set described
above. The candidate search in Phase~I-A selects
\[
\bar r=3.75.
\]
The initial regression produces
\[
\lambda_{\min}(\widehat P_{\rm a}^{0})=2.8195\times10^{-4},
\
\sigma_{\min}(\widehat\Theta_{{\rm a},A}^{0})=2.4519\times10^{-1},
\]
with least-squares residual
\[
\left\|
\widehat\Theta_{{\rm a},A}^{0}\widehat\theta_{{\rm a},0}
-\widehat b_{{\rm a},0}
\right\|_2
=2.0974\times10^{-3}.
\]
Phase~I update reaches its stopping condition after \(25\)
iterations. The terminal quantities are
\[
c_{25}=1.9059,
\quad
\bar r-2c_{25}=-0.0619,
\]
and
\[
\widehat K^\dagger=
\begin{bmatrix}
0.2559 & 0.3420 & 0.6570
\end{bmatrix}.
\]
For validation, the closed-loop operator calculated from the simulation
model satisfies
\[
s(\mathcal L_{\widehat K^\dagger})=-2.3418<0.
\]
Figure~\ref{fig:mf_stage1} shows a smooth decrease of the upper bound and
the closed-loop spectral abscissa.

The
finite-data iteration terminates after \(9\) updates and returns
\[
\widehat K_{\rm mf}=
\begin{bmatrix}
2.2222 & 6.6023 & 18.9504
\end{bmatrix}.
\]
The corresponding validation quantities are
\[
s(\mathcal L_{\widehat K_{\rm mf}})=-22.1241,
\
\lambda_{\min}(R+\widehat\Gamma^{i})=8.6425\times10^{-2},
\]
and the Riccati residual is
\[
\|\mathcal R(\widehat P^{i})\|_F
=3.7767\times10^{-1}.
\]
The nonzero residual measures the discrepancy between the finite-data
regression fixed point and the exact SARE solution. It reflects the
Monte-Carlo approximation, the Euler--Maruyama discretization, and the
least-squares estimation error. Figures~\ref{fig:mf_stage2_residual} and
\ref{fig:mf_abs_error} show that the policy and value-matrix increments
decay to small values. Figure~\ref{fig:mf_state_traj} confirms mean-square
stabilizing behavior in the reported simulation: the controller is activated
at \(t=2\,\mathrm{s}\), the controlled state trajectories remain close to
the origin, and the uncontrolled trajectories diverge.

\begin{figure}[!t]
  \centering
  \begin{minipage}[t]{0.48\columnwidth}
    \centering
    \includegraphics[width=\linewidth]{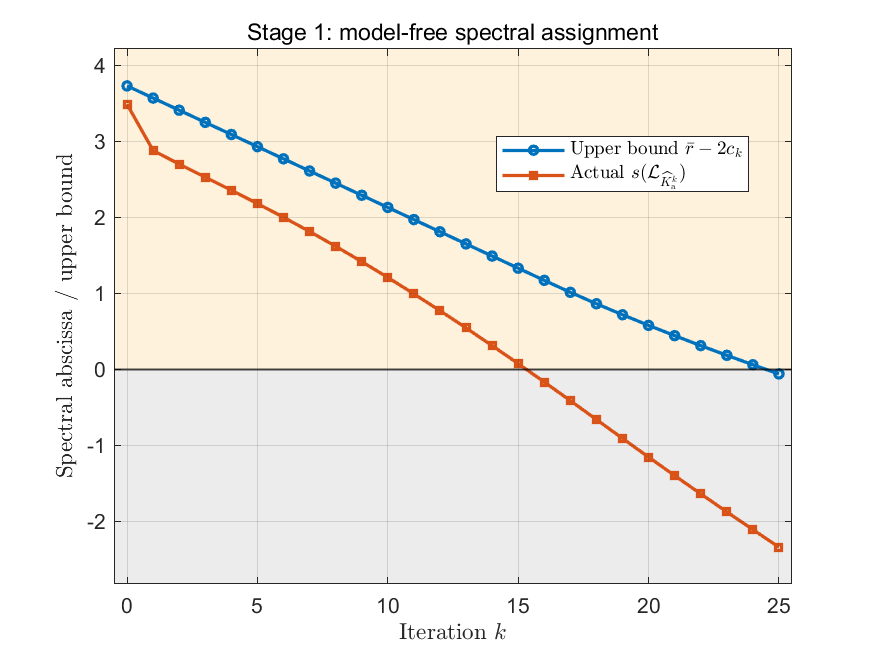}
    \captionof{figure}{Closed-loop spectral abscissa and spectral upper bound in Phase~I of Algorithm~\ref{alg:model-free}.}
    \label{fig:mf_stage1}
  \end{minipage}\hfill
  \begin{minipage}[t]{0.48\columnwidth}
    \centering
    \includegraphics[width=\linewidth]{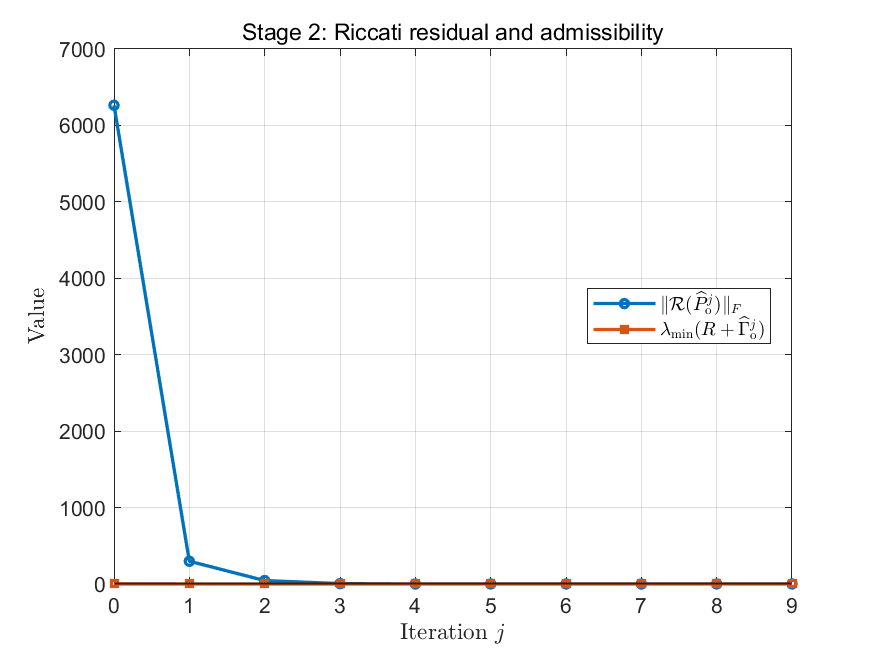}
    \captionof{figure}{Validation Riccati residual and finite-data admissibility margin in Phase~II of Algorithm~\ref{alg:model-free}.}
    \label{fig:mf_stage2_residual}
  \end{minipage}
\end{figure}

\begin{figure}[!t]
  \centering
  \begin{minipage}[t]{0.48\columnwidth}
    \centering
    \includegraphics[width=\linewidth]{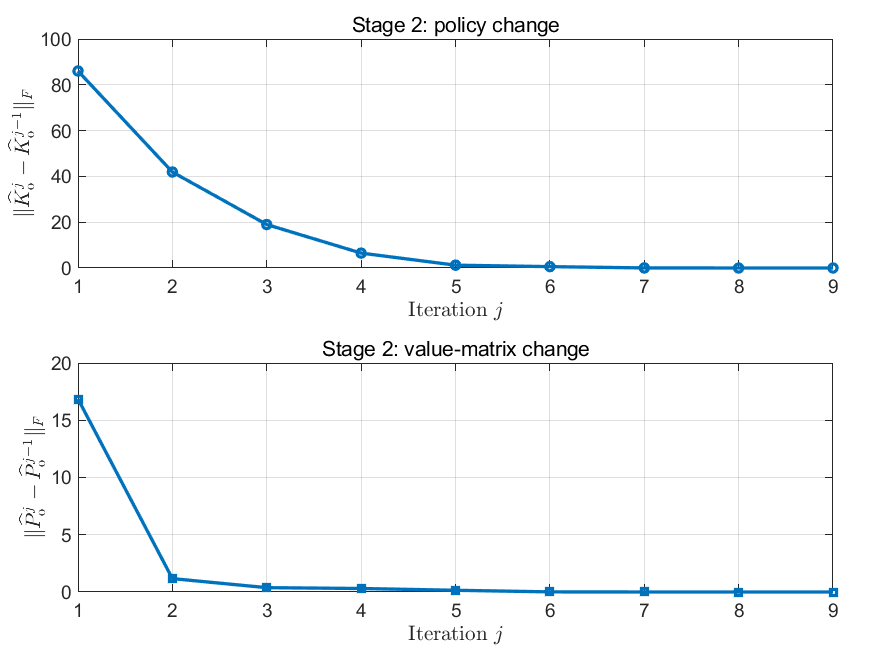}
    \captionof{figure}{Successive finite-data policy and value-matrix changes in Phase~II of Algorithm~\ref{alg:model-free}.}
    \label{fig:mf_abs_error}
  \end{minipage}\hfill
  \begin{minipage}[t]{0.48\columnwidth}
    \centering
    \includegraphics[width=\linewidth]{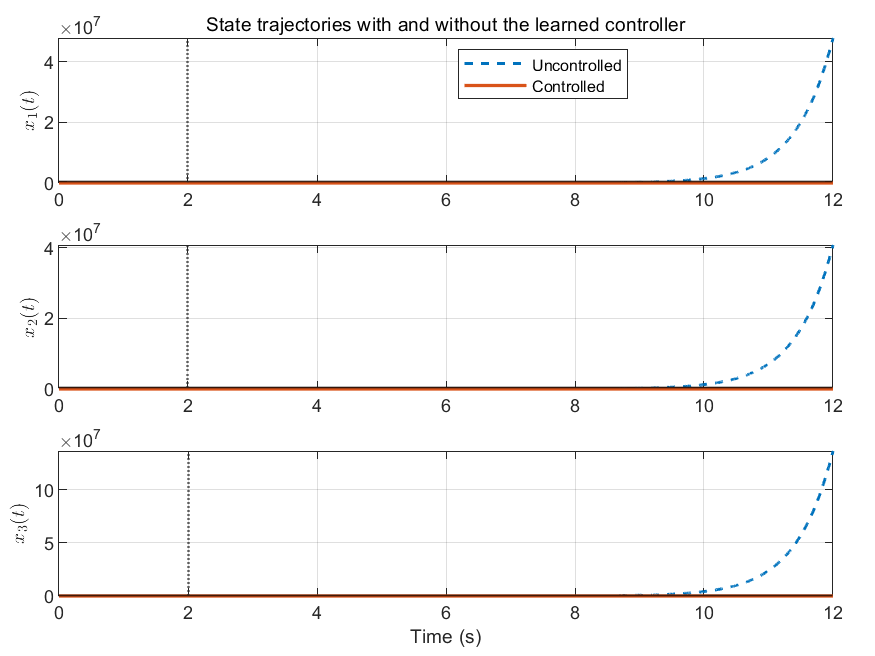}
    \captionof{figure}{State trajectories with and without the controller obtained by Algorithm~\ref{alg:model-free}.}
    \label{fig:mf_state_traj}
  \end{minipage}
\end{figure}
\section{Conclusion }\label{VII}
This paper has developed the PI framework via spectrum assignment for indefinite SLQ control of systems with multiplicative noise. By constructing an auxiliary stable system and updating a cumulative factor, the proposed method moves the operator spectrum into the stable region, and thereby obtains an initial stabilizing controller. This controller is then used to initialize the PI procedure for the original indefinite SLQ problem. Two implementations have been presented. The model-based algorithm gives a constructive stabilizer-search, while the model-free algorithm can approximate the optimal control using fixed offline data without requiring accurate system dynamics. The numerical results show that both algorithms can stabilize an initially unstable stochastic system and converge to the approximately optimal solution. Future work will consider extensions to stochastic \(H_{\infty}\) control, zero-sum games, and nonlinear stochastic systems.

\appendix

\subsection {Proof of Lemma~\ref{lem:policy-identity}}\label{app:proofs}
Substitute $Q_K$, $G(P)$ and $L(P)$  into \eqref{P1.2} and rearrange to obtain
\begin{align}\label{phi-pk}
\begin{split}
    \Phi(P,K)
  =& A^\top P + P A + C^\top P C + Q\\
    &- K^\top L(P) - L(P)^\top K + K^\top G(P) K.
\end{split}
\end{align}
Similarly, substituting $K(P)$ for $K$ into \eqref{P1.2} and expanding yields 
\begin{align}\label{eq:Phi-KP}
\begin{split}
    \Phi(P,K(P))
  = &A^\top P + P A + C^\top P C + Q\\&
    - K(P)^\top L(P) - L(P)^\top K(P)\\&
    + K(P)^\top G(P) K(P).
\end{split}
\end{align}
Subtracting \eqref{eq:Phi-KP} from \eqref{phi-pk} yields
\begin{align*}
\begin{split}
  &\Phi(P,K) - \Phi(P,K(P)) \\
  &=-K^\top L(P) - L(P)^\top K + K^\top G(P) K 
       \\ &\quad \! \ + K(P)^\top L(P) + L(P)^\top K(P)
             - K(P)^\top G(P) K(P), 
\end{split}\label{eq:Phi-diff-1}
\end{align*}
from $K(P) := G(P)^{-1} L(P)$, we have
\begin{equation}\label{eq:Phi-diff-2}
\begin{split}
&\Phi(P,K) - \Phi\bigl(P,K(P)\bigr)\\
&= K^\top G(P)K + K(P)^\top G(P)K(P) \\
&\quad - K^\top G(P)K(P) - K(P)^\top G(P)K \\
&= \bigl(K - K(P)\bigr)^\top G(P)\bigl(K - K(P)\bigr).
\end{split}
\end{equation}
This completes the proof. \hfill $\square$

\subsection{Proof of Lemma~\ref{lem:positive-lyapunov-inverse}}\label{lem:ct_order1} 
From \cite{zhang2004stabilizability}, if matrix $K$ is a stabilizer, the Lyapunov-type operator $\mathcal L_K$ is exponentially stable, i.e.,
\[
\mathrm{Re}\big(\sigma(\mathcal L_K)\big)=\mathrm{Re}\big(\sigma(\mathcal L_K^\ast)\big)<0,
\]
where $0\notin\sigma(\mathcal L_K^\ast)$. Thus, from Theorem~3.1 in \cite{zhang2004stabilizability}, 
for any symmetric matrix $Y=Y^\top$, the equation \eqref{eq:positive-lyapunov} 
admits a unique symmetric solution $X=X^\top$, given by
\[
X=(-\mathcal L_K^\ast)^{-1}Y=\int_0^\infty e^{\,t\mathcal L_K^\ast}(Y)\,dt.
\]
Let $\Phi_K(t)$ denote the state transition matrix of the closed-loop system, for any $t\ge0$,
\[
e^{\,t\mathcal L_K^\ast}(Y)=\mathbb E\!\left[\Phi_K(t)^\top Y\Phi_K(t)\right].
\]
Hence, if $Y\ge 0$, then $e^{\,t\mathcal L_K^\ast}(Y)\ge 0,\quad \forall\,t\ge0$, and therefore $X=\int_0^\infty e^{\,t\mathcal L_K^\ast}(Y)\,dt\ge0$. This completes the proof. \hfill $\square$

\subsection {Proof of Theorem \ref{thm:indefinite-PI}}\label{thm1+prop2}

For brevity, let
\[
\Phi(P,K):=\mathcal L_K^*(P)+Q_K.
\]
By Lemma~\ref{lem:stabilizing-sare}, $G(P^*)>0,
\
K^*=G(P^*)^{-1}L(P^*)\in\mathscr K_{\rm ms},
\
\Phi(P^*,K^*)=0.$

\medskip
\noindent
\emph{Step 1: joint induction for items 1) and 2).}

We prove jointly that, for every \(k\ge0\),
\begin{equation}
K^k\in\mathscr K_{\rm ms},
\quad
P^k-P^*\ge0,
\quad
G(P^k)\ge G(P^*)\>0.
\label{eq:joint-induction-claim}
\end{equation}

\noindent
\emph{Base case: \(P^0\).}
The hypothesis \(K^0\in\mathscr K_{\rm ms}\) ensures that
\eqref{eq:indefinite-PE} has a unique symmetric solution \(P^0\).
Applying Lemma~\ref{lem:policy-identity} at \(P=P^*\) and \(K=K^0\)
gives
\begin{equation}
\Phi(P^*,K^0)
=
(K^0-K^*)^\top G(P^*)(K^0-K^*)
=:Y_0\ge0.
\label{eq:base-Y0}
\end{equation}
Since \(\Phi(P^0,K^0)=0\), subtracting
\eqref{eq:base-Y0} gives
\begin{equation}
\mathcal L_{K^0}^*(P^0-P^*)+Y_0=0.
\label{eq:base-domain-equation}
\end{equation}
Lemma~\ref{lem:positive-lyapunov-inverse} yields $P^0-P^*\succeq0.$

Therefore,
\begin{equation}
G(P^0)
=
G(P^*)+D^\top(P^0-P^*)D
\ge G(P^*)>0.
\label{eq:base-G-positive}
\end{equation}
Thus, \(K^1=G(P^0)^{-1}L(P^0)\) is well defined, and
\eqref{eq:joint-induction-claim} holds at \(k=0\).

\medskip
\noindent
\emph{Induction step: from \((K^k,P^k)\) to
\((K^{k+1},P^{k+1})\).}
Suppose that \eqref{eq:joint-induction-claim} holds for some \(k\ge0\).
In particular, \(G(P^k)>0\); hence, \(K^{k+1}\) in
\eqref{eq:indefinite-PI} is well defined. Define
\begin{align}
Z_k
&:=
(K^k-K^{k+1})^\top G(P^k)(K^k-K^{k+1})
\ge0,
\label{eq:PI-Zk}\\
Y_{k+1}
&:=
(K^{k+1}-K^*)^\top G(P^*)(K^{k+1}-K^*)
\ge0.
\label{eq:PI-Ykplus1}
\end{align}
Since \(K^{k+1}=G(P^k)^{-1}L(P^k)\),
Lemma~\ref{lem:policy-identity} and
\(\Phi(P^k,K^k)=0\) give
\begin{equation}
\Phi(P^k,K^{k+1})=-Z_k.
\label{eq:PI-improvement-identity}
\end{equation}
The same identity at \(P=P^*\), together with
\(\Phi(P^*,K^*)=0\), gives
\begin{equation}
\Phi(P^*,K^{k+1})=Y_{k+1}.
\label{eq:PI-optimal-comparison}
\end{equation}
Subtracting \eqref{eq:PI-optimal-comparison} from
\eqref{eq:PI-improvement-identity} yields
\begin{equation}
\mathcal L_{K^{k+1}}^*(P^k-P^*)+Z_k+Y_{k+1}=0.
\label{eq:PI-stability-identity}
\end{equation}

We first show that \(K^{k+1}\in\mathscr K_{\rm ms}\). Suppose to the
contrary that \(K^{k+1}\notin\mathscr K_{\rm ms}\). By
Lemma~\ref{lem:instability-certificate}, there exist \(\mu\ge0\) and a
nonzero \(X=X^\top\ge0\) such that
\begin{equation}
\mathcal L_{K^{k+1}}(X)=\mu X.
\label{eq:PI-instability-certificate}
\end{equation}
Take the Frobenius inner product of \eqref{eq:PI-stability-identity} with
\(X\). Using the adjoint relation and \eqref{eq:PI-instability-certificate}
gives
\begin{equation}
0
=
\mu\langle X,P^k-P^*\rangle
+\langle X,Z_k\rangle
+\langle X,Y_{k+1}\rangle.
\label{eq:PI-contradiction-sum}
\end{equation}
Every term on the right-hand side of \eqref{eq:PI-contradiction-sum} is
nonnegative. Hence,
\begin{equation}
\langle X,Z_k\rangle=0.
\label{eq:PI-XZ-zero}
\end{equation}
By \eqref{eq:PI-Zk} and \(G(P^k)>0\),
\begin{align*}
\langle X,Z_k\rangle
&=
\operatorname{tr}\!\left(
X(K^k-K^{k+1})^\top G(P^k)(K^k-K^{k+1})
\right)\\
&=
\left\|
G(P^k)^{1/2}(K^k-K^{k+1})X^{1/2}
\right\|_F^2.
\end{align*}
Thus, \eqref{eq:PI-XZ-zero} implies
\begin{equation}
(K^k-K^{k+1})X=0,
\qquad
X(K^k-K^{k+1})^\top=0.
\label{eq:PI-gain-difference-null}
\end{equation}
Using \eqref{eq:PI-gain-difference-null} in the definitions of
\(\mathcal L_{K^k}\) and \(\mathcal L_{K^{k+1}}\) gives
\[
\mathcal L_{K^k}(X)
=
\mathcal L_{K^{k+1}}(X)
=
\mu X.
\]
This contradicts \(K^k\in\mathscr K_{\rm ms}\). Hence, $K^{k+1}\in\mathscr K_{\rm ms}.$

The equation \eqref{eq:indefinite-PE} at iteration \(k+1\) consequently has a
unique symmetric solution \(P^{k+1}\). Combining
\(\Phi(P^{k+1},K^{k+1})=0\) with
\eqref{eq:PI-optimal-comparison} gives
\begin{equation}
\mathcal L_{K^{k+1}}^*(P^{k+1}-P^*)+Y_{k+1}=0.
\label{eq:PI-next-domain-equation}
\end{equation}
Lemma~\ref{lem:positive-lyapunov-inverse} yields $P^{k+1}-P^*\ge0.$

Therefore,
\begin{equation}
G(P^{k+1})
=
G(P^*)+D^\top(P^{k+1}-P^*)D
\ge G(P^*)>0.
\label{eq:PI-next-G-positive}
\end{equation}
This proves \eqref{eq:joint-induction-claim} at \(k+1\). Hence,
 1) and 2) of Theorem~\ref{thm:indefinite-PI} hold for every \(k\ge0\).

\medskip
\noindent
\emph{Step 2: proof of item 3).}
By \eqref{eq:PI-improvement-identity} and
\(\Phi(P^{k+1},K^{k+1})=0\),
\begin{equation}
\mathcal L_{K^{k+1}}^*(P^k-P^{k+1})+Z_k=0.
\label{eq:PI-monotonicity-equation}
\end{equation}
Since \(K^{k+1}\in\mathscr K_{\rm ms}\) and \(Z_k\ge0\),
Lemma~\ref{lem:positive-lyapunov-inverse} yields
\[
P^k-P^{k+1}\ge0.
\]
Together with item 1), this proves
\[
P^*\leq P^{k+1}\leq P^k,
\quad k\ge0.
\]

\medskip
\noindent
\emph{Step 3: proof of item 4).}
By item 3), \(\{P^k\}\) is monotone nonincreasing and bounded below by
\(P^*\). Since \(\mathbb S^n\) is finite dimensional, there exists
\(P^\infty=P^{\infty\top}\) such that
\[
P^k\rightarrow P^\infty,
\quad
P^*\leq P^\infty\leq P^0.
\]
Item 1) gives
\[
G(P^\infty)\ge G(P^*)>0.
\]
By continuity of \(G(\cdot)^{-1}L(\cdot)\),
\[
K^{k+1}\rightarrow
K^\infty:=G(P^\infty)^{-1}L(P^\infty).
\]
The sequence \(\{K^k\}_{k\ge1}\) has the same limit. Passing to
the limit in \(\Phi(P^k,K^k)=0\) gives
\begin{equation}
\Phi(P^\infty,K^\infty)=0.
\label{eq:PI-limit-evaluation}
\end{equation}
Since \(K^\infty=G(P^\infty)^{-1}L(P^\infty)\),
\eqref{eq:PI-limit-evaluation} shows that \(P^\infty\) satisfies the
SARE \eqref{eq:sare}.
\[
A^\top P^\infty+P^\infty A+C^\top P^\infty C+Q
-L(P^\infty)^\top G(P^\infty)^{-1}L(P^\infty)=0.
\]

It remains to verify that \(K^\infty\) is mean-square stabilizing. From
\eqref{eq:PI-limit-evaluation}, the SARE for \(P^*\), and the policy
identity at \(P=P^*\), one obtains
\begin{equation}
\mathcal L_{K^\infty}^*(P^\infty-P^*)
+
(K^\infty-K^*)^\top G(P^*)(K^\infty-K^*)
=0.
\label{eq:PI-limit-stability-identity}
\end{equation}
Suppose that \(K^\infty\notin\mathscr K_{\rm ms}\). Then there exist
\(\mu\ge0\) and a nonzero \(X=X^\top\ge0\) satisfying
\[
\mathcal L_{K^\infty}(X)=\mu X.
\]
Taking the Frobenius inner product of
\eqref{eq:PI-limit-stability-identity} with \(X\) gives
\[
0
=
\mu\langle X,P^\infty-P^*\rangle
+
\left\|
G(P^*)^{1/2}(K^\infty-K^*)X^{1/2}
\right\|_F^2.
\]
Thus, $(K^\infty-K^*)X=0,
\quad
X(K^\infty-K^*)^\top=0.$

Consequently, $\mathcal L_{K^*}(X)
=
\mathcal L_{K^\infty}(X)
=
\mu X,$ 
which contradicts \(K^*\in\mathscr K_{\rm ms}\). Therefore,
\(K^\infty\in\mathscr K_{\rm ms}\).

Hence, \(P^\infty\) is a stabilizing solution of the SARE. The uniqueness
in Lemma~\ref{lem:stabilizing-sare} gives $P^\infty=P^*,
\quad
K^\infty=K^*.$

This proves item 4). \hfill $\square$

\subsection{ Proof of Lemma \ref{lem:auxiliary-spectrum-assignment} and     Proposition~\ref{alpha_bound+Nmax}}\label{auxiliary-spectrum-assignment}

\emph{\textbf{Proof of Lemma~\ref{lem:auxiliary-spectrum-assignment}.}}
Lemma~\ref{lem:initial-auxiliary-stability} gives $s\bigl(\mathcal L^{\rm a}_{K_{\rm a}^{0},c_0}\bigr)<0.$

Suppose that $s\bigl(\mathcal L^{\rm a}_{K_{\rm a}^{k},c_k}\bigr)<0$ 
holds for an arbitrary \(k\ge0\). The stage-cost matrix in
\eqref{eq:auxiliary-PE} is positive definite. Thus, the positive inverse
property of the stable stochastic Lyapunov operator yields
\[
P_{\rm a}^{k}>0.
\]
In particular,
\[
R_{\rm a}+D^\top P_{\rm a}^{k}D>0,
\]
and the update \eqref{eq:auxiliary-PI} is well defined.

Apply Lemma~\ref{lem:policy-identity} to the auxiliary system with \(c_k\)
fixed. Combining \eqref{eq:auxiliary-PE} and
\eqref{eq:auxiliary-PI} gives
\begin{align}
&\bigl(\mathcal L^{\rm a}_{K_{\rm a}^{k+1},c_k}\bigr)^*
(P_{\rm a}^{k})
+Q_{\rm a}
+(K_{\rm a}^{k+1})^\top R_{\rm a}K_{\rm a}^{k+1}
\nonumber\\
&\quad=
-\left(K_{\rm a}^{k}-K_{\rm a}^{k+1}\right)^\top
\left(R_{\rm a}+D^\top P_{\rm a}^{k}D\right)
\left(K_{\rm a}^{k}-K_{\rm a}^{k+1}\right)
\leq0.
\label{eq:auxiliary-improvement-identity}
\end{align}
It follows that
\begin{equation}
\bigl(\mathcal L^{\rm a}_{K_{\rm a}^{k+1},c_k}\bigr)^*
(P_{\rm a}^{k})
\leq
-Q_{\rm a}
\leq
-\frac{\lambda_{\min}(Q_{\rm a})}
{\lambda_{\max}(P_{\rm a}^{k})}P_{\rm a}^{k}.
\label{eq:auxiliary-Lyapunov-margin}
\end{equation}
Let
\[
\eta_k:=
\frac{\lambda_{\min}(Q_{\rm a})}
{\lambda_{\max}(P_{\rm a}^{k})}.
\]
Since \(P_{\rm a}^{k}\leq
\lambda_{\max}(P_{\rm a}^{k})I_n\) and
\(Q_{\rm a}\ge\lambda_{\min}(Q_{\rm a})I_n\), one has
\[
Q_{\rm a}\ge \eta_k P_{\rm a}^{k}.
\]
Hence, \eqref{eq:auxiliary-Lyapunov-margin} is a strict stochastic
Lyapunov inequality. Applying It\^{o}'s formula to
\(x^\top P_{\rm a}^{k}x\) gives
\[
\mathbb E[x(t)^\top P_{\rm a}^{k}x(t)]
\le
e^{-\eta_k t}x(0)^\top P_{\rm a}^{k}x(0),
\]
which yields
\[
s\bigl(\mathcal L^{\rm a}_{K_{\rm a}^{k+1},c_k}\bigr)
\le-\eta_k = -\frac{\lambda_{\min}(Q_{\rm a})}
{\lambda_{\max}(P_{\rm a}^{k})}<0.
\]
Therefore, the step size in \eqref{eq:model-based-alpha} satisfies
\[
0<\alpha_{k+1}
\le
-\frac{\vartheta}{2}
s\bigl(\mathcal L^{\rm a}_{K_{\rm a}^{k+1},c_k}\bigr)
<
-\frac12
s\bigl(\mathcal L^{\rm a}_{K_{\rm a}^{k+1},c_k}\bigr),
\]
which proves \eqref{eq:model-based-alpha-feasibility}.

For the fixed gain \(K_{\rm a}^{k+1}\), Proposition~\ref{prop:operator-translation}
gives
\begin{align*}
s\bigl(\mathcal L^{\rm a}_{K_{\rm a}^{k+1},c_{k+1}}\bigr)
&=
s\bigl(\mathcal L^{\rm a}_{K_{\rm a}^{k+1},c_k}\bigr)
+2\alpha_{k+1}\\
&\le
-(1-\vartheta)
\frac{\lambda_{\min}(Q_{\rm a})}
{\lambda_{\max}(P_{\rm a}^{k})}
<0.
\end{align*}
Thus, \eqref{eq:auxiliary-stability-all-k} follows by induction.

Equation~\eqref{eq:original-spectrum-bound} follows from
\eqref{eq:abscissa-translation}:
\[
s\bigl(\mathcal L_{K_{\rm a}^{k}}\bigr)
=
s\bigl(\mathcal L^{\rm a}_{K_{\rm a}^{k},c_k}\bigr)
+\bar r-2c_k
<
\bar r-2c_k.
\]
At an index \(k_\star\) satisfying \eqref{eq:phase-I-stop}, this relation
gives
\[
s\bigl(\mathcal L_{K_{\rm a}^{k_\star}}\bigr)
<
\bar r-2c_{k_\star}
<-\varepsilon.
\]
Thus, \(K^\dagger=K_{\rm a}^{k_\star}\in\mathscr K_{\rm ms}\).

\medskip
\noindent
\emph{\textbf{Proof of Proposition~\ref{alpha_bound+Nmax}.}}
By \eqref{eq:model-based-alpha} and
\eqref{eq:uniform-PhaseI-value-bound},
\[
\alpha_{k+1}
=
\frac{\vartheta\lambda_{\min}(Q_{\rm a})}
{2\lambda_{\max}(P_{\rm a}^{k})}
\ge
\frac{\vartheta\lambda_{\min}(Q_{\rm a})}{2\bar p}
=
\underline\alpha>0,
\]
which proves \eqref{eq:alpha-lower-bound}. Hence,
\[
c_k
=
c_0+\sum_{j=1}^{k}\alpha_j
\ge
c_0+k\underline\alpha.
\]
For \(k=N_{\max}\), \eqref{eq:PhaseI-iteration-bound} yields 
\[
c_k>
\frac{\bar r+\varepsilon}{2}.
\]
This inequality is equivalent to
\(\bar r-2c_k<-\varepsilon\), namely,
\eqref{eq:phase-I-stop}. Therefore, Phase~I terminates in finitely many
iterations. \hfill\(\square\)

\subsection{Proofs of Propositions~\ref{prop:phase-I-identifiability} and
\ref{prop:phase-II-identifiability}}\label{full rank}

\textbf{Proof of Proposition~\ref{prop:phase-I-identifiability}.}
Write
\[
\mathcal T_{{\rm a},k}
:=
\begin{bmatrix}
I_{d_x} & 0 & 0\\
-2\delta_k I_{d_x} & 0 & 0\\
0 & -2I_{mn} & 0\\
0 & 0 & -I_{d_u}
\end{bmatrix}.
\]
The dimensions of \(\mathcal T_{{\rm a},k}\) are
\[
(2d_x+mn+d_u)\times d.
\]
By the definitions of \(\Theta_{{\rm a},xx}^{k}\) and
\(\Theta_{{\rm a},A}^{k}\),
\[
\Theta_{{\rm a},A}^{k}
=
\mathscr D_{{\rm a},k}\mathcal T_{{\rm a},k}.
\]
Let \(z=\operatorname{col}(z_P,z_\Xi,z_\Gamma)\in\mathbb R^d\)
satisfy
\[
\Theta_{{\rm a},A}^{k}z=0.
\]
Assumption~\ref{ass:phase-I-data-richness} implies that
\(\mathscr D_{{\rm a},k}\) has trivial null space. Hence
\[
\mathcal T_{{\rm a},k}z=0.
\]
The first block row gives \(z_P=0\). The third and fourth block rows
then give \(z_\Xi=0\) and \(z_\Gamma=0\), respectively. Thus
\(z=0\), which proves
\[
\operatorname{rank}(\Theta_{{\rm a},A}^{k})=d.
\]
The exact regression identity is consistent with the true parameter
vector. Full column rank yields the unique least-squares solution
\eqref{eq:phase-I-ls-solution}. \hfill\(\square\)

\textbf{Proof of Proposition~\ref{prop:phase-II-identifiability}.}
Define
\[
\mathcal T_{\rm o}
:=
\operatorname{diag}
\left(I_{d_x},-2I_{mn},-I_{d_u}\right).
\]
The matrix \(\mathcal T_{\rm o}\in\mathbb R^{d\times d}\) is
nonsingular. From the definition of the Phase-II regression matrix,
\[
\Theta_{A}^{i}
=
\mathscr D_{i}\mathcal T_{\rm o}.
\]
Assumption~\ref{ass:phase-II-data-richness} gives
\[
\operatorname{rank}(\mathscr D_{i})=d.
\]
Right multiplication by the nonsingular matrix \(\mathcal T_{\rm o}\)
preserves column rank. Hence
\[
\operatorname{rank}(\Theta_{A}^{i})=d.
\]
The exact regression identity is consistent with the true parameter
vector, and the unique least-squares solution is
\eqref{eq:phase-II-ls-solution}. \hfill\(\square\)

\subsection{Proof of Lemma~\ref{lem:phase-I-step}}\label{pf-phase-I-step}

From Theorem~2.1 in \cite{li2022stochastic}, the stability condition~\eqref{eq:pre-shift-stability} and
\(Q_{{\rm a},k+1}>0\) imply that
\eqref{eq:auxiliary-comparison-lyapunov} has a unique solution
\(\overline P_{\rm a}^{k+1}>0\).

Define
\[
Z_{{\rm a},k}
:=
(K_{\rm a}^{k}-K_{\rm a}^{k+1})^\top
\left(R_{\rm a}+D^\top P_{\rm a}^{k}D\right)
(K_{\rm a}^{k}-K_{\rm a}^{k+1})
\ge0.
\]
The auxiliary policy-evaluation equation at \(K_{\rm a}^{k}\), the policy-improvement identity, and
\eqref{eq:auxiliary-comparison-lyapunov} yield
\[
\mathcal L_{K_{\rm a}^{k+1},c_k}^{{\rm a},*}
\left(P_{\rm a}^{k}-\overline P_{\rm a}^{k+1}\right)
=
-Z_{{\rm a},k}
\leq0.
\]
The operator
\(\mathcal L_{K_{\rm a}^{k+1},c_k}^{\rm a}\) is stable by
\eqref{eq:pre-shift-stability}. From Lemma~\ref{lem:positive-lyapunov-inverse}, we have
\[
P_{\rm a}^{k}-\overline P_{\rm a}^{k+1}\ge0,
\]
which proves~\eqref{eq:auxiliary-comparison-order}.

The change from \(c_k\) to \(c_{k+1}\) gives
\[
\mathcal L_{K_{\rm a}^{k+1},c_{k+1}}^{{\rm a},*}
\left(\overline P_{\rm a}^{k+1}\right)
=
-Q_{{\rm a},k+1}
+
2\alpha_{k+1}\overline P_{\rm a}^{k+1}.
\]
By~\eqref{eq:auxiliary-comparison-order} and
\eqref{eq:model-free-alpha},
\[
-Q_{{\rm a},k+1}
+
2\alpha_{k+1}\overline P_{\rm a}^{k+1}
<0.
\]
By Lemma~2.1 in \cite{li2022stochastic}, we conclude
\[
s\!\left(\mathcal L^{\rm a}_{K_{\rm a}^{k+1},c_{k+1}}\right)<0.
\] \hfill\(\square\)

\end{document}